\newtheorem{thm}{Theorem}[section]
\newtheorem{lemma}[thm]{Lemma}
\newtheorem{prop}[thm]{Proposition}
\newtheorem{cor}[thm]{Corollary}
\newtheorem{example}[thm]{Example}
\newtheorem{rmk}[thm]{Remark}
\newtheorem{com}[thm]{Comment}
\newbox\ncintdbox \newbox\ncinttbox
\let\phi\varphi
\let\epsilon\varepsilon
\let\tilde\widetilde
\newcommand{\Z}{{\mathbb Z}}
\newcommand{\N}{{\mathbb N}}
\newcommand{\T}{{\mathcal{T}}}
\def\Le{\text{Leb}}
\def\a{\alpha}
\def\eps{\varepsilon}
\def\o{\omega}
\def\ox{\overline{x}}
\def\oz{\overline{z}}
\def\B
\def\di{\setminus}
\def\N{\ensuremath{\mathbb N}}
\def\Z{\ensuremath{\mathbb Z}}
\def\B{\ensuremath{\mathcal B}}
\def\P{\hat{P}}
\def\o{\ensuremath{\underline{\omega}}}
\def\eps {\varepsilon}
\def\T{\hat{T}}
\def\oom{\overline{\omega}}
\title{Extreme Value Theory for synchronization of  Coupled Map Lattices}
 \date{\today}
\begin{document}
 \begin{abstract}
We show that the probability of the appearance of synchronization in chaotic coupled  map lattices is related to the distribution of the maximum of a certain observable evaluated along almost all orbits.  We show that such a distribution belongs to the family of  extreme value laws, whose parameters, namely the extremal index,  allow us  to get a detailed description of the probability of synchronization. Theoretical results are supported by robust numerical computations that allow us to go beyond the theoretical framework provided and are potentially applicable to physically relevant systems.
\end{abstract}
 \maketitle
 \begin{center}
\authors{D. \ Faranda \footnote{LSCE-IPSL, CEA Saclay l'Orme des Merisiers, CNRS UMR 8212 CEA-CNRS-UVSQ,
Universit\'e Paris-Saclay, 91191 Gif-sur-Yvette, France.} \footnote{London Mathematical Laboratory, London, UK
. E-mail: {\tt \email{davide.faranda@lsce.ipsl.fr}}.},
H.\ Ghoudi\footnote{ Laboratory of Dynamical Systems and Combinatorics, Sfax University Tunisia and  Aix Marseille Universit\'e, CNRS, CPT, UMR 7332, 13288 Marseille, France and Universit\'e
de Toulon, CNRS, CPT, UMR 7332, 83957 La Garde. Email: {\tt \email{ghoudi.hamza@gmail.com }}.},
P.\   Guiraud\footnote{CIMFAV, Facultad de Ingeniería, Universidad de Valparaíso, Valpara\`iso, Chile. E-mail: {\tt \email{pierre.guiraud@uv.cl}}.},
S.\ Vaienti\footnote{Aix Marseille Univ, Universit\'e de Toulon, CNRS, CPT, Marseille, France. E-mail: {\tt \email{vaienti@cpt.univ-mrs.fr}}.}}

\end{center}

\section{Introduction}
Coupled Map Lattices (CML)  are discrete time and space dynamical systems introduced in the mid 1980's by Kaneko and Kapral
as suitable models for the study and the numerical simulation of nonlinear phenomena in spatially extended systems.
The phase space of a CML is a set of  scalar (or vector)  sequences indexed by a lattice $L$,  e.g. $L=\Z^d,\ \Z$ or $\Z/n\Z$.
For instance, a configuration ${\ox\in I^L}$ of the lattice may represent a spacial sample of a  mesoscopic quantity with value in an interval $I$, such as a chemical concentration, the velocity of a fluid, a population density or a magnetization.
The dynamics of  the lattice is given by a map $\T:I^{L}\to I^{L}$ which is usually written as the composition of two maps, i.e. $\T:=\Phi_\gamma\circ\T_0$, where $\T_0:I^{L}\to I^{L}$ is called the {\it uncoupled dynamics} and $\Phi_\gamma :I^{L}\to I^{L}$ the {\it coupling operator}.
The uncoupled dynamics acts on a configuration $x\in I^{L}$ as the product dynamics of
a {\it local map} $T:I\to I$, that is $\T_0(x)_i:=T(x_i)$ for every $i\in L$.  The coupling operator models  spacial interactions, which intensity is given by the parameter $\gamma\in[0,1]$.
In particular, in the absence of interaction  $\gamma=0$ and $\Phi_0=Id$.
For example, for $L=\Z$ or $L=\Z/n\Z$ the coupling operator often writes as
\begin{equation}\label{COUPLINGINT}
(\Phi_\gamma(x))_i:=\sum_{j\in L}c_{\gamma,j}x_{i-j} \quad\forall i\in L,
\end{equation}
where $c_{\gamma,j}\geq 0$, $\sum_{j\in L}c_{\gamma,j}=1$ and $c_{0,0}=1$.

In the huge literature about CML, one can find many possible choices for the local map and the coupling operator. For instance, the dynamics of CML of bistable, unimodal, or chaotic maps have been studied for different kind and range of coupling, revealing a rich phenomenology including spatial chaos, stable periodic points, space-time chaos, clusters, traveling waves and synchronization, (see \cite{CK87, FC, K93} and references therein).
In this paper, we will consider a system of $n$ coupled chaotic local maps (the precise properties are given in Section \ref{MAPOP})  defined  for
any $\ox:=(x_1,\dots,x_n)\in I^n$ by:
$$
(\T(\ox))_i=(1-\gamma)T(x_i)+\frac{\gamma}{n}\sum_{j=1}^n T(x_{j})\qquad \forall i\in \{1,\dots,n\}.
$$
Note that the study of this system is equivalent to that of a CML  on a periodic lattice where the coupling operator is defined by \eqref{COUPLINGINT} with $L=\Z/n\Z$, $c_{\gamma,0}=(1-\gamma+\frac{\gamma}{n})$ and  $c_{\gamma,j}=\frac{\gamma}{n}$ for all $j\in\{1,\dots,n-1\}$.
The chaotic and synchronization properties of CML of logistic local maps with this mean-field-type global coupling were observed and studied by Kaneko in \cite{KA} and then, among others,  by P. Ashwin \cite{PA} (and references therein).


%

The first contribution which looked at CML in the framework and with the tools of ergodic theory,
was the work by Bunimovich and Sinai. In the famous paper \cite{BS}, using thermodynamic formalism, they proved the existence of mixing SRB measures for infinite CML with chaotic local map and weak (nearest
neighbor) coupling.  Since then, the progress in the study of the  statistical properties of chaotic CML has been enormous, with the contribution of several people, and the development
 of a spectral theory \cite{KELI, KELIV}. We defer to the book \cite{FC} for a wide panorama on the different approaches to CML and for exhaustive references.

In this paper, we present a new application of Extreme Value Theory (EVT) to CML on a finite (or periodic) lattice. Our aim  is to provide a first approach to CML by using EVT and to show how to get a certain number of rigorous results about the statistics of some rare events, such as the synchronization in chaotic CML. We say that the CML is synchronized when it is near a homogeneous configuration (in a small neighborhood of the diagonal of the phase space).   Synchronization is usually intended to last for a while once it has started and this is what usually happens for some kinds of  chains of synchronized oscillators. This is not the case  of course for chaotic CML, since almost every orbit is recurrent by the Poincar\'e Theorem. What we actually investigate is therefore  the {\em probability of a first synchronization and how long we should wait to get it with a prescribed accuracy}. EVT provides this kind of quantitative information, since  synchronization processes can be interpreted and quantified by computing the asymptotic distribution of the maximum of a suitable random process, see Sections \ref{EVTL} and \ref{EVTS}.

Although we could not get a global synchronization persisting in time, we could ask about the distribution of the number of successive synchronization events when the systems evolves up to a certain time. We will see that after a suitable rescaling, the distribution of that number follows a compound Poisson statistics: it is worth mentioning that for two {\em uncoupled} expanding maps of the circle, this result dates back to a paper by Coelho and Collet, \cite{CC}. \\

Actually a first result in our direction was given in the paper \cite{CG}, although not explicitly related to EVT, where the authors considered two coupled interval maps and applied their spectral theory of   open systems with holes to investigate the first entrance of the two components into a small strip along the diagonal, which is equivalent to the synchronization of the two-components lattice up to a certain accuracy.
 In more general situations, we will present arguments
about the spectral properties of the transfer  operator of the system
to sustain the existence of a limit distribution for  the maxima of some observables related to synchronization, and we will discuss a formula approximating  the  {\em extremal index} (a parameter of the distribution)  for lattices with an arbitrary  number of  components.   We therefore estimate the  behavior of such an index  when the number of components is large. We will then generalize the theory to  CML which are  randomly perturbed with additive noise and show, in particular with numerical evidence,  that the extremal index is $1$ for any dimension of the lattice. We hope that our approach could be helpful to understand and quantify those  phenomena, like in neuronal spikes or in business cycles of financial markets, where bursts  of synchronization happen, disappear, happen again, apparently in a disordered manner, but very often following the extreme distributions arising in chaotic systems.\\

In Section 2, we present a powerful and general approach based on  perturbation of the transfer operator, and which has the advantage of being applicable to a large class of observables arising in the study of EVT. In Section 3, we give a short insight into basic notions of EVT, especially when it is applied to recurrence in dynamical systems. In particular, we define the extremal index and show that it goes to one when the size of the lattice goes to infinity or in presence of noise. In Section 4, we apply EVT to compute the probability of synchronization events, and sustain the results by computing  the extremal index in Section 5. This computation depends on the behavior of the invariant density in the neighborhood of the diagonal; our formula (\ref{EEIII}) can be proved under the assumption {\bf P8} which we believe to be unavoidable.  In Section 6, we study the distribution of the number of successive synchronization events. In Section 7, we  show that our analytic results and estimates  are supported  by numerical computations. They confirm the existence of an extreme value distribution for a different kind of synchronization, which we called {\em local}, and they validate the expected compound Poisson statistics for the distribution of the number of successive  visits.  The fact that the extremal index for local synchronization seems not to depend on the size of the lattice   is an interesting numerical discovery. In forthcoming papers we will study more general CML with non-local form of coupling including the important case of diffusive or Laplacian interaction. A few other possible developments are presented at the end of the paper (see section 7.2).

\section{The map and the operators}\label{MAPOP}
As mentioned in the Introduction, we  consider a finite CML  of size $n\geq 2$ with a local map  $T: I\to I$ and a  global coupling. It is defined for any $\overline{x}=(x_1,\dots,x_n)\in I^n$ and  $\gamma\in[0,1]$ by \footnote{We will not index the map $\T$ with $n$, hoping it will be clear from the context.}:
\begin{equation}\label{TM}
\T(\overline{x})_i=(1-\gamma)T(x_i)+\frac{\gamma}{n}\sum_{j=1}^nT(x_j)\qquad \forall\, i\in\{1,2,\dots,n\},
\end{equation}
where $\overline{x}=(x_1,\dots,x_n)\in I^n, \gamma\in[0,1]$. We suppose that $T$ is a piece-wise  expanding map of the unit interval onto itself,  with a finite number of branches, say $q,$ and which we take of class $C^2$ on the interiors of the domains of injectivity $A_1,\dots, A_q,$ and extended by continuity to the boundaries. The $C^2$ assumption is used in the proof of Propositions (\ref{propp}) and (\ref{proppp}), although it could be relaxed with a $C^{1+\alpha}$ condition. Instead the finitness of the number of branches is
widely used in almost all the arguments. Let us denote by $U_k, k=1,\dots, q^n,$ the domains of local injectivity of $\hat{T}$.  By the previous assumptions on $T$, there exist open sets $W_k\supset U_k$ such that $\T_{|W_k}$ is a $C^2$ diffeomorphism (on the image). We will require that
$$
s_n:=\sup_k\sup_{\ox\in \T (W_k)}||D\T_{|W_k}^{-1}(\ox)||< \lambda<1,
$$
where
$\lambda:=\sup_i\sup_{x\in T(A_i)}|DT_{|A_i}^{-1}(x)|,$ and $||\cdot||$ stands for the euclidean norm. We will write {\em dist} for the distance with respect to this norm.

 An important tool  for our further considerations is  the transfer, or Perron-Frobenius (PF), operator. The PF operator $\P$ of the map $\T$ is simply defined by the duality integral relation
  $$
  \int \hat{P}(f) g d\Le=\int f g\circ \hat{T} d\Le,
  $$
  where $\Le$ denotes the Lebesgue measure on $I^n,$ $f\in L^1$ and $g\in L^{\infty}.$\footnote{In the following we will use the same symbol $\Le$ for any $n$. Moreover $L^1,$ $L^p$ and $L^{\infty}$ will be taken with respect to $\Le.$ Finally the integral with respect to Lebesgue measure will be denoted with $\int d\Le(x)$ or $\int dx.$} The spectral properties of the PF operator become interesting when it acts on suitable Banach spaces. Let us therefore suppose that there exists a Banach space $\B$ with norm $||\cdot||_{\B},$ which is compactly injected in $L^1$  and the following properties hold\footnote{ We will call a Banach space with this property {\em adapted} (to $L^1$).}:
  \begin{itemize}
  \item {\bf P1} (Lasota-Yorke inequality) For any $f\in \B$ there exists $\eta<1$ and $C>0$ such that
  $$
  ||\hat{P}f||_{\B}\le \eta ||f||_{\B}+C||f||_1.
  $$
  \end{itemize}

\noindent The Lasota-Yorke inequality implies that $\hat{P}$ has an isolated eigenvalue equal to $1$  which is also the spectral radius of $\hat{P}$ ({\em spectral gap property}). We will often call $\eta$ the contraction factor in the Lasota-Yorke inequality.
  \begin{rmk}\label{LLYY}
  By iterating the previous inequality one easily get that
  \begin{equation}\label{ILY}
  ||\hat{P}^kf||_{\B}\le \eta^k ||f||_{\B}+\frac{C}{1-\eta}||f||_1,\quad \forall\, k>1.
  \end{equation}
  This last inequality is actually  needed in the perturbation theory used below. If one cannot achieve it because {\bf P1} fails, it is enough to get {\bf P1} for an iterate of $\hat{T}.$ In this case a standard argument allows us to get again (\ref{ILY}).
  \end{rmk}

\begin{itemize}
  \item {\bf P2} The eigenvalue $1$ is simple and $\hat{P}$ has no other eigenvalue on the unit circle.
This implies that $\hat{P}$ preserves a  mixing measure $\hat{\mu}$ which is the unique absolutely continuous invariant measure with respect to Lebesgue. We moreover assume that the associated density $\hat{h}\in L^{\infty}.$
  \end{itemize}

It is well known that with our assumptions on $T$, the uncoupled dynamics $\hat{T}_0$, i.e $\gamma=0$ in (\ref{TM}), satisfies {\bf P1} on any {\em reasonable} functional space $\B$. We will give examples of such spaces just below. Therefore, the spectral decomposition theorem of Ionescu-Tulcea-Marinescu, see for instance \cite{HH}, guarantees the existence of a finite number of absolutely continuous ergodic components.
They reduce to a unique absolutely continuous mixing measure, which is  {\bf P2}, with some topological transitivity condition on the map $T$, which could be achieved by asking, for instance,  $T$ to be Bernoulli, Markov, covering, etc (see, e.g.,  Example \ref{EX0}).

In order to transfer the properties {\bf P1} and {\bf P2} to the map $\hat{T}$ with $\gamma>0,$ we invoke the
   perturbation theory by Keller and Liverani developed in \cite{KL}. According to that theory, one should previously show the persistence of the Lasota-Yorke inequality (\ref{ILY}) for the map $\hat{T}$ and then   check that, for any $f\in \B$, we have
    \begin{equation}\label{TN}
    ||(\hat{P}-P_0)f||_1\le p_{\gamma}||f||_{\B},
     \end{equation}
     where $P_0$ is the PF operator of the uncoupled system ($\gamma=0$), and  $p_{\gamma}$ is a monotone upper semi-continuous function converging to $0$ when $\gamma$ goes to $0$. We defer again to Example \ref{EX0} for a particular case, where this technique can be applied.

  The aforementioned perturbation theory was successively improved in \cite{CG} by the same authors, in order to deal with {\em open} systems which   produce a different kind of perturbation for the transfer operator. This   perturbation  arises naturally in the context of the EVT, as we will see in the next section.
In order to introduce and define it, let $\{D_l\}_{l\in\N}$ be an increasing collection of  nested subsets of $I^n$ such that $\Le(D_l)\rightarrow 1$ when  $l\rightarrow \infty$. Moreover, suppose that the sets $D_l$ are the closures of their interiors and have piece-wise $C^{\infty}$ and co-dimension $1$ boundaries. According to the observable used for the application of EVT, the sets $D_l$ have a specific definition, and  they will be given by \eqref{DL1} and \eqref{set}. The EVT can be related to the spectral theory by considering the perturbed transfer  operator $\tilde{P}_l$, which is defined for any $h\in \B$ by:
$$
\tilde{P}_l(h):=\P(h {\bf 1}_{D_l}).
$$

We now add  new assumptions this operator must satisfy in order to apply the perturbation theory for open systems. The goal is to compare  the operators $\P$ and $\tilde{P}_l$ and  get an asymptotic expansion  for the spectral radius of $\tilde{P}_l$ close to $1$ for large values of $l$.  We will see that it  will give us the extremal index in the limiting distribution of  Gumbel's law. We follow in particular the scheme proposed by Keller in \cite{KE}, that we also summarized in \cite{AFV}, Section 5, and in Chapter 7 of the book \cite{ERDS} to which we defer for more details. There are 6 assumptions in \cite{KE}, Section 2. The first three ask for uniform (in the ``noise" parameter $l$) quasi-compactness for the operator $\tilde{P}_l$. We summarize them in the following single assumption:
\begin{itemize}
\item {\bf P3} The operators $\tilde{P}_l$ satisfy a Lasota-Yorke inequality, uniform in $l$,  on the space $\B,$ namely, the factors $\eta$ and $C$ are the same for every sufficiently large $l$.
\end{itemize}

\noindent The next two properties {\bf P4} and {\bf P5} cover assumptions (5) and (6) in Keller \cite{KE}. We also notice that {\bf P4}, together with {\bf P2}, implies assumption (4) in \cite{KE}, as explained in Remark 3 still in \cite{KE}.
\begin{itemize}
\item {\bf P4} For any $h\in \mathcal{B},$ the quantity
$$
r_l:=\sup_{h, ||h||_{\B}\le 1}|\int (\P h-\tilde{P}_lh)d\Le|
$$
goes to zero when $l\rightarrow  \infty$.
\item  {\bf P5} The density $\hat{h}$ of the (unique mixing) invariant measure $\hat{\mu}$ of $T$ verifies
    \begin{equation}\label{P5}
    r_l ||(\P-\tilde{P}_l)\hat{h}||_{\mathcal{B}}\le C' \hat{\mu}(D_l^c),
    \end{equation}
     where $C'$ is a constant independent of $l$ and $D_l^c$ denotes the complement of $D_l$. We  moreover assume that the density $\hat{h}$ is strictly positive, namely its infimum is larger than $\hat{h}^{( \mathrm{inf})}>0$ on a set of full measure.
\end{itemize}

\noindent We finally assume that

\begin{itemize}
\item {\bf P6} The following limit
 \begin{equation}\label{Q3}
 q_k:=\lim_{l\rightarrow \infty}q_{k,l}:=\lim_{l\rightarrow \infty}\frac{\int (\P-\tilde{P}_l)\tilde{P}_l^k(\P-\tilde{P}_l)(\hat{h})d\Le}{\hat{\mu}(D^c_l)}
 \end{equation}
exists for any $k\in \mathbb{N}\cup \{0\}$.
\end{itemize}

Under the assumptions {\bf P1}-{\bf P6}, it has been proved in \cite{CG} that
  \begin{equation}\label{Q2}
 \theta:=1-\sum_{k=0}^{\infty}q_k,
 \end{equation}
exists and  is equal to $\lim_{l\rightarrow \infty}\frac{1-\rho_l}{\hat{\mu}(D^c_l)}$, where $\rho_l$ is the spectral radius of $\tilde{P}_l$. Therefore we have the following asymptotic expansion for $\rho_l$:
 \begin{equation}\label{Q1}
 1-\rho_l=\hat{\mu}(D^c_l) \theta (1+o(1)), \ \text{in the limit} \ l\rightarrow \infty.
 \end{equation}
We stress that $\rho_l$ is the largest eigenvalue of $\tilde{P}_l,$ that there are no other eigenvalues on the  circle of radius $\rho_l,$ and that there exist functions $\hat{g}_l\in \B$ and measures $\hat{\mu}_l$  for which the operators $\tilde{P}_l$ satisfy
 \begin{equation}\label{deco}
 \tilde{P}_lh=\rho_l \hat{g}_l\int h d\hat{\mu}_l+Q_lh
 \end{equation}
for all $h\in \B$.
 Moreover  $\int \hat{g}_l d\hat{\mu}=1 ,$ $\int h d\hat{\mu}_l\rightarrow \int h d\hat{\mu}$ when $l\rightarrow \infty$ and finally $Q_l$ is a linear operator with spectral radius strictly less than $\rho_l$ and satisfying: $||Q_l^n||_{\B}\le \varsigma_{l}^n$, for a suitable $0<\varsigma_{l}<1$, see again \cite{CG} for the derivation of these formulas.\\

It is a remarkable fact that this approach automatically provides the  scaling exponent $\theta$  for the asymptotic distribution of the maxima, see (\ref{PIV}) below, and therefore it gives a  new proof of the existence of that distribution.  The quantity $\theta$ is called the extremal index (EI) and it will play an important role in the following. We will see in particular that it  gives a correction to the pure exponential law for the distribution of the maxima. In that respect it coincides with the extremal index as it is defined in EVT, see \cite{AJEM2}, \cite{ERDS}. Our next task will therefore be to look for a Banach space which verifies the preceding six  properties.

One natural candidate  would be the {\em space $BV(I^n)$  of  functions of bounded variation on $\mathbb{R}^n$ restricted to the $L^1$ functions supported on $\tilde{I}^n:=\text{interior}(I^n).$}
 This space was used in \cite{CG} in dimension $2$, but it seems difficult to use it in higher dimensions to obtain {\bf P5}. The reason is that in order to get {\bf P5} one needs first to compute the quantity $r_l$ in {\bf P4}. Since $h$ may not be necessarily in $L^{\infty}$, we should use Sobolev's inequality to estimate the integral and we get $r_l$ of order $\Le (D^c)^{\frac1n}.$ This is not enough to recover {\bf P5}, since
the Banach norm $||(\hat{P}-\tilde{P}_l)\hat{h}||_{\mathcal{B}}$ is simply bounded by a constant as a consequence of the Lasota-Yorke inequality. Instead for $n=2$ the characterization of the total variation as the maximum of sectional variations along the coordinate axis is sufficient to get ({\bf P5}), and it was just used in \cite{CG}. By referring to (\ref{RC}) below, we can in fact bound the integral $\int |h{\bf 1}_{D_l^c}|d\Le$ by $1/2$ times the total variation of the density times the Lebesgue measure of the section of $D_l^c$ along one of the two coordinate axis (we are using here the corollary 2.1 in \cite{KELI}). But that sectional measure is of the same order of the Lebesgue measure of the whole $D_l^c$, just because we are on the unit square.   We therefore turn our attention to another functional space, the quasi-H\"older space, whose importance for expanding dynamical systems was stressed in the seminal works by Keller \cite{kel}  and Saussol \cite{S}.

We start by defining for all functions $h\in L^1(I^n)$ a semi-norm, which given two real numbers $\eps_0>0$ and $0<\alpha\le 1$, writes
$$
|h|_{\alpha}:=\sup_{0<\eps\le \eps_0}\frac{1}{\eps^{_\alpha}}\int \text{osc}(h, B_{\eps}(\ox)) d\Le,
$$
where $\text{osc}(h,A):=\text{Esup}_{\ox\in A}h(\ox)-\text{Einf}_{\ox\in A}h(\ox)$ for any
measurable set $A$.  We say that $h\in V_{\a}(I^n)$ if $|h|_{\a}<\infty$. Although the value of $|h|_{\a}$ depends on $\eps_0,$    the space $ V_{\a}(I^n)$ does not. Moreover the value of $\eps_0$ can be  chosen in order to satisfy a few geometric constraints, like distortion, and to guarantee the forthcoming bound (\ref{ETA2})\footnote{For explicit computations of $\eps_0$ on concrete examples, see \cite{S} and \cite{HV}; for the example (\ref{EX0}) below, that value was computed in Proposition 6 in \cite{TSU}.}. We equip $V_{\a}$ with the Banach norm
$$
||h||_{\a}:=|h|_{\a}+||h||_1,
$$
and from now on $V_{\a}$ will denote the Banach space $\mathcal{B}=(V_{\a}(I^n), ||\cdot||_{\a}).$ With the assumptions we put on the map $\hat{T}$, in particular for the nature and smoothness of the boundaries of the domains $U_k,$ it can be shown that the transfer operator $\P$ leaves $V_{\a}$ invariant with $\a=1,$ and moreover a Lasota-Yorke inequality ({\bf P1}) holds,  whenever

  \begin{equation}\label{ETA2}
\eta:=s_n+\frac{4s_n}{1-s_n}Z\frac{Y_{n-1}}{Y_n}<1,
\end{equation}
where $Y_n$ is the volume of the unit ball in $\mathbb{R}^n$ and $Z$ is the maximal number of the boundaries of the domains of local injectivity that  meet in one point, see \cite{S}.  Also, one can show that $\mathcal{B}$ can be continuously  injected into $L^{\infty}$ and in particular, \cite{S}, $||h||_{\infty}\le C_H ||h||_{\a}$, where $C_H=\frac{\max(1,\eps_0^{\a})}{Y_n \eps_0^n}$.

Our next step is to show that $\mathcal{B}$ is invariant under the perturbed operator $\tilde{P_l}$. By comparing with the computations in \cite{S}, we see that the new term we should take care of is:
$$
|h{\bf 1}_{D_l}|_{\a}=\sup_{0<\eps\le \eps_0}\frac{1}{\eps^{\a}}\int \text{osc}(h{\bf 1}_{D_l}, B_{\eps}(\ox)) d\Le.
$$
Using the results in \cite{S} and with $B_{\eps}(D_l)$ denoting the $\epsilon$-neighborhood of the set $D_l$\footnote{To be more precise we have $B_{\eps}(D_l):=\{\ox\in \mathbb{R}^n: \text{dist}(\ox, D_l)\le \eps\}.$} we have:
            $$
            \text{osc}(h{\bf 1}_{D_l}, B_{\eps}(\ox))\le \text{osc}(h, D_l\cap B_{\eps}(\ox)){\bf 1}_{D_l}+
            $$
            $$
            2 \left[ \text{Esup}_{B_{\eps}(\ox)\cap D_l}|h|\right]{\bf 1}_{ B_{\eps}(D)\cap (B_{\eps}(D_l^c))}(\ox).
            $$
            By integrating and dividing by $\eps^{-\a}$ we get
            $$
            |h{\bf 1}_{D_l}|_{\a}\le |h|_{\a}+\sup_{0<\eps\le \eps_0}\frac{2}{\eps^{\a}}\int_{B_{\eps}(\ox)\cap D}\sup|h(\ox)|{\bf 1}_{ B_{\eps}(D_l)\cap (B_{\eps}(D_l^c))}(\ox)d\Le \le$$
            $$
            |h|_{\a}+2||h||_{\infty}\frac{\Le( B_{\eps}(D_l)\cap (B_{\eps}(D_l^c))}{\eps^{\a}}.
            $$
            Before continuing we must say what really the set $D_l$ is in our case. Its complement, $D_l^c$ is given in (\ref{set}) and with the actual notation reads
            $$
            D_l^c=\{\ox\in I^n: \max_{i\neq j}|x_i-x_j|\le \nu_l\},
            $$
            where $\nu_l$ goes to zero when $l\rightarrow \infty.$ In this case it is easy to see that
            \begin{equation}\label{EDC}
            \Le( B_{\eps}(D_l)\cap (B_{\eps}(D_l^c))\le C_n\eps \nu_l,
             \end{equation}
             see Appendix 1 for the proof. Therefore we can continue the previous bound as:
            $$
             |h{\bf 1}_{D_l}|_{\a}\le  |h|_{\a}[1+2C_H C_n\eps^{1-\alpha}\nu_l ].
            $$
             This computation shows that $\mathcal{B}$ is preserved by $\tilde{P_l}$, but if we want to get a Lasota-Yorke inequality for it, and therefore satisfy ({\bf P3}), we should multiply $\eta$ by $(1+2C_HC_n\nu_l )$ and ask that $\eta(1+2C_HC_n\nu_l )<1, $ which is surely satisfied by taking $l$ large enough.   Alternatively, one could take higher iterates  of $\hat{T}$. In this case the backward images of $D_l$ will grow as well, but linearly with the power of the map and their contribution will be dominated by the exponential decay of the contraction factor.\\ As we said above property ({\bf P2}) requires that the invariant measure of the unperturbed map be mixing; we will give an explicit example below.

Since  quasi-H\"older functions $h$ are essentially bounded, it is easy to get Property ({\bf P4}) estimating as:
\begin{equation}\label{RC}
|\int (\P h-\tilde{P}_l h)d\Le|\le \int |h{\bf 1}_{D_l^c}|d\Le\le ||h||_{\infty}\Le(D_l^c)\le C_H||h||_{\alpha}\Le(D_l^c).
\end{equation}

  To check ({\bf P5}), we begin to observe that the Banach norm $||(\hat{P}-\tilde{P}_l)\hat{h}||_{\mathcal{B}}$ is bounded by a constant, say $\hat{C}$ depending on $\hat{h}$ as a consequence of the Lasota-Yorke inequality. Since the density is bounded away from zero, we immediately have $r_l ||(\P-\tilde{P}_l)\hat{h}||_{\mathcal{B}}\le \frac{C_H \hat{C}}{\hat{h}^{\mathrm{(inf)}}}\mu(D_l^c).$

\begin{example}\label{EX0}
\em We now give an easy example which satisfies {\bf P1} to {\bf P3} with $\mathcal{B}$ the space of quasi-H\"older functions; {\bf P4} and {\bf P5} follow from the above arguments and finally Property ({\bf P6}) will be proved in Section 5 under the additional assumption {\bf P0} and for a much larger class of maps. We stress that our example will be used for the numerical simulations in Section 7. Moreover the techniques we are using could be easily extendable to other transformations not necessarily affine.   As the one-dimensional map $T$ we will take $T(x)=3x$ mod$1.$ By  coupling $n$ of them  as in  (\ref{TM}) we get a piece-wise linear uniformly expanding higher dimensional map. We first notice that this map is not necessarily continuous on the $n$-torus, but it satisfies
the assumption ({\bf P0}) in Section 5. The Lasota-Yorke inequality (\ref{LLYY}) can be proved for $l$ large enough, say for $l>l_0$ if we verify the condition (\ref{ETA2}). If it does not hold for the map $\hat{T}$ it will be enough to get it for an iterate of $\T$ and this is surely possible thanks to Theorem 11 in Tsujii's paper \cite{TSU}, which holds for expanding piecewise linear maps whose locally domains of injectivity are bounded by polyhedra. The constants $\eta$ and $C$ in (\ref{LLYY}) depend in our case (local affine maps), simply on the contraction rate $s_n=3^{-n}$ to the power $l.$ The next step is to prove the bound (\ref{TN}).

This can be easily achieved by adapting our proofs of Proposition 4.3 in \cite{AFV}, or of Lemma 7.5 in \cite{HNTV}. The basic ingredients of such proofs are: (i) the control of the distance between the preimages of the same point $z\in I^n$ with the maps $\hat{T}_0$ and $\hat{T}$ (for a given $\gamma$); (ii) the distortion, involving the two determinants $|\det(D\hat{T_0})|$ and $|\det(D\hat{T})|$ (for a given $\gamma).$ By the structure of the map (\ref{TM}), one immediately sees that the distance at point (i) is of order $\gamma$ times a constant depending on the dimensionality $n$ of ambient space. The ratio of the determinants at point (ii) is instead of order $(1-\gamma)^n$ as it follows from the proof of Proposition \ref{pis} below. This is enough to obtain the bound (\ref{TN}); we left the details to the reader. We should finally check that the invariant density is bounded away from zero for the map $\hat{T}.$ We dispose of, at least, two criteria of {\em covering} type for that. The first is taken from Section 7.3.1 and Lemma 7.5 in our paper \cite{HNTV} and requires the existence of a  domain of local injectivity $U_k$ (see Section 2),  whose image is the full hypercube $I^n$. The second is described in Sublemma 5.3 in \cite{HV} and requires the so-called {\em topological exactness}, namely the existence for any
$\overline{x}\in I^n$ and $\epsilon>0,$ of an integer $N_{\epsilon} = N_{\epsilon}(x, \epsilon) > 0$ such that $\hat{T}^{N_{\epsilon}}B_{\epsilon}(\overline{x})=I^n.$ Both results rely on an interesting property of the quasi-H\"older functions, namely the existence of a ball where the (essential) infimum of such a function is bounded away from zero, see \cite{S}. We believe such covering  conditions are satisfied in our cases. As an example, we report the computation of the density for two and three  coupled maps;  it is also interesting to observe that the density does not oscillate too much in the vicinity of the diagonal, which is required by our assumption {\bf P8}, see Figure \ref{dens1} for $n=2$ and Figure \ref{dens} for $n=3$.

\end{example}

\section{Extreme values and localizations}\label{EVTL}
 In this section and in the next one, we apply EVT to the study of a few recurrence behaviors for our system of CML.


 There are, at least, two approaches to EVT. The first, which we
 call the pure probabilistic one (PPA) uses strong mixing properties to  get  fast decay of correlations for a suitable class of observables and to control short returns around a given point. It is worth mentioning  that the PPA covers cases where there is no spectral gap and therefore the   correlations do not decay exponentially fast, see for instance \cite{ERDS} for a rich variety of examples.

The second approach, developed by Keller \cite{KE} and which we name  the spectral approach (SA), is based on the  perturbation technique discussed in the preceding section and which allow us to get Gumbel's law directly by a smooth perturbation of the spectral radius of the operator $\tilde{P_l}$. We will show explicitly in Section 4 how this method works. The SA seems particularly adapted to investigate synchronization, while the PPA is not suited, for the moment, to study observables which become infinite on sets with uncountably many points, which is what happens when we consider synchronization (along the diagonal). As we have already pointed out in the previous section, the issue in the SA is to verify the properties {\bf P1}-{\bf P6}.

 Let us suppose the vector $\overline{z}$ is given. When the orbit of a point  $\overline{x}$ enters in a sufficiently small ball centered at $\overline{z}$ we will say that there is {\em localization} of the orbit around the point $\overline{z}$.

 Let us introduce the observable
 \begin{equation}\label{O1}
 \varphi(\overline{x}):=-\log\{\sum_{i=1}^n|x_i-z_i|\},
 \end{equation}
 and consider the maximum
 \begin{equation}\label{max}
 M_m(\overline{x}):=\max\{\varphi(\overline{x}), \varphi(\T\overline{x}),\dots \varphi(\T^{m-1}\overline{x})\}.
 \end{equation}
By adopting the point of view of EVT, we will fix a positive number $\tau$ and we will ask for the existence of  a sequence $u_m$ for which the following limit exists
\begin{equation}\label{TAU}
m\ \hat{\mu}(\varphi>u_m)\rightarrow \tau, \ m\rightarrow \infty.
\end{equation}
We will say that {\em the sequence $M_m$ has an Extreme Value Law, (EVL)}, if there exists a non-degenerate distribution function $H:\mathbb{R}\rightarrow [0,1],$ with $H(0)=0$ such that
\begin{equation}\label{EVT}
\hat{\mu}(M_m\le u_m)\rightarrow 1-H(\tau), \ m\rightarrow \infty.
\end{equation}
By using the expression of $\varphi$ we can rewrite (\ref{TAU}) as
\begin{equation}\label{TAU2}
m\ \hat{\mu}(U^{(n)}_m)\rightarrow \tau.
\end{equation}
where
\begin{equation}\label{DL1}
U^{(n)}_m:=\{\overline{x}\in I^n: \sum_{i=1}^n|x_i-z_i|\le \nu_m\},\  \text{with} \ \nu_m:=e^{-u_m}
\end{equation}
and consequently (\ref{EVT}) can be restated as
\begin{equation}\label{EVT21}
\hat{\mu}(\overline{x}\in I^n: \T^k(\overline{x})\notin U^{(n)}_m, k=0,\dots,m-1)\rightarrow 1-H(\tau).
\end{equation}
We call $\nu_m$ the {\em accuracy of the localization} and we use the symbol $a_c$ to denote it. Of course it depends on $m$, but as we will see soon, it is sometimes convenient to fix the value of $a_c$ and choose $m$ accordingly. If we see $\{\hat{T}^k\}_{k\ge 1}: I^n\rightarrow I^n$ as a   vector valued random variable on the space $\{I^n, \hat{\mu}\}$ associating to the point   $\overline{x}\in I^n$ its orbit,  then  the limit (\ref{EVT21}) could also be interpreted as the  {\em probability that each component $\{\hat{T}^k_i\}_{k\ge 1}$  is  localized with accuracy $a_c=e^{-u_m}$ around $z_i$  for the first time when $k>m.$}  In order to get the probability of such an event, we have  to insure a few assumptions, which were already anticipated in the previous section, and which will allow us  to apply Proposition 3.3 in \cite{AFV} that we restate in the following proposition:
\begin{prop}\label{P1}
Suppose that the system $(I^n, \hat{T}, \hat{\mu})$ has a unique absolutely continuous invariant and  mixing measure   $\hat{\mu}$ with density
bounded away from zero and exponential decay of correlations on an adapted Banach space.  Let $(X_0, X_1, \cdots)$ be the process given by $X_m = \phi\circ \hat{T}^m, m\in \mathbb{N},$
  where $\phi$  achieves a
global maximum at some points $\overline{z}$. Then we have an EVL for the maximum $M_m$ and:\\
(1) if $\overline{z}$ is not a periodic point, then the EVL is such that $H(\tau)=1-e^{-\tau}$;\\
(2) if $\overline{z}$ is a (repelling) periodic point of prime period $p$, then the EVL is
such that $H(\tau)=1-e^{-\theta\tau}$ , where the {\em extremal index (EI)}  is given by $\theta(\overline{z}) = 1-| \det D(\hat{T}^{p})(\overline{z})|^{-1}$.
\end{prop}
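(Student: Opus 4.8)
\medskip
\noindent\textbf{Proof proposal.}\quad
The proposition restates Proposition~3.3 of \cite{AFV}, so the task is to check that the CML system $(I^n,\T,\hat\mu)$ satisfies its hypotheses and to trace why the extremal index is the one announced; I would do both within the spectral approach of Section~\ref{MAPOP}. Take as ``hole'' the super-level sets of the observable $\varphi$ of \eqref{O1}, i.e. $D_l^c:=U^{(n)}_m$ of \eqref{DL1} (identifying the noise index $l$ with $m$), and $\tilde P_l h=\P(h\,\one_{D_l})$. The hypotheses of the proposition --- unique absolutely continuous invariant mixing measure $\hat\mu$, density $\hat h$ bounded below, exponential decay of correlations, adapted Banach space --- are precisely {\bf P1}, {\bf P2}, the lower bound on $\hat h$ contained in {\bf P5}, and the spectral gap they entail on $\B=V_\a(I^n)$, all established in Section~\ref{MAPOP}. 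The auxiliary perturbative properties are then easy for this hole: {\bf P4} holds since $V_\a\hookrightarrow L^\infty$ gives $r_l\le C_H\,\Le(U^{(n)}_m)\to0$; the quantitative form \eqref{P5} of {\bf P5} follows from $\hat h\ge\hat h^{(\mathrm{inf})}>0$ as at the end of Section~\ref{MAPOP}; and {\bf P3} needs only the estimate $\Le(B_\eps(D_l)\cap B_\eps(D_l^c))\le C_n\,\eps\,\nu_l$, valid because $\partial U^{(n)}_m$ is a codimension-one polyhedral surface of diameter $O(\nu_m)$, exactly as for \eqref{EDC}; one then asks $\eta(1+2C_HC_n\nu_l)<1$ for $l$ large, or passes to an iterate of $\T$.

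The substantive point is {\bf P6} and the value $\theta=1-\sum_{k\ge0}q_k$ of \eqref{Q3}--\eqref{Q2}. Unwinding the duality in \eqref{Q3} via $\int(\P-\tilde P_l)g\,\dif\Le=\int g\,\one_{D_l^c}\,\dif\Le$ gives, writing $A_l:=D_l^c=U^{(n)}_m$,
\[
q_{k,l}=\frac{\hat\mu\!\left(A_l\cap\T^{-1}A_l^c\cap\cdots\cap\T^{-k}A_l^c\cap\T^{-(k+1)}A_l\right)}{\hat\mu(A_l)},
\]
the conditional probability, given that the orbit visits the shrinking target, that its first return there occurs exactly $k+1$ steps later. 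If $\oz$ is not periodic, then $\T^{j}(\oz)\ne\oz$ for every $j\ge1$, so for each fixed $k$ no branch of $\T^{-(k+1)}A_l$ meets $A_l$ once $l$ is large; hence $q_k=0$ for all $k$, $\theta=1$, and $H(\tau)=1-e^{-\tau}$. If $\oz$ is a repelling periodic point of prime period $p$, the times in $\{1,\dots,k\}$ that are not multiples of $p$ impose no constraint near $\oz$, so $q_{k,l}=0$ unless $p\mid k+1$; for $k+1=p$ the only branch of $\T^{-p}A_l$ meeting $A_l$ is the one through $\oz$, an ellipsoid contained in $A_l$ (by expansion) of Lebesgue volume $\sim|\det D\T^p(\oz)|^{-1}\Le(A_l)$ on which the intermediate constraints are automatic, so $q_{p-1}=|\det D\T^p(\oz)|^{-1}$; for $k+1=jp$ with $j\ge2$ the branches through $\oz$ of $\T^{-p}A_l,\dots,\T^{-jp}A_l$ are nested inside $A_l$, so each term of the inclusion--exclusion over the events $\{\T^{rp}\ox\in A_l\}$, $1\le r\le j-1$, equals the $\hat\mu$-measure of the branch of $\T^{-jp}A_l$ through $\oz$ and they cancel, giving $q_{jp-1}=0$. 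Summing, $\theta=1-|\det D\T^p(\oz)|^{-1}$.

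With {\bf P1}--{\bf P6} in hand, the scheme of \cite{KE} carried out in \cite{CG} yields $1-\rho_l=\hat\mu(D_l^c)\,\theta\,(1+o(1))$, i.e. \eqref{Q1}. Since $M_m\le u_m$ means precisely $\T^k\ox\in D_m$ for $k=0,\dots,m-1$, one has $\hat\mu(M_m\le u_m)=\int\tilde P_m^{\,m}\hat h\,\dif\Le$, and the quasi-compact decomposition \eqref{deco} together with the uniform spectral gap gives $\int\tilde P_m^{\,m}\hat h\,\dif\Le=\rho_m^{\,m}(1+o(1))$. Using $m\,\hat\mu(U^{(n)}_m)\to\tau$ from \eqref{TAU2}, $\rho_m^{\,m}=\bigl(1-\tfrac{\theta\tau}{m}(1+o(1))\bigr)^m\to e^{-\theta\tau}$; hence $\hat\mu(M_m\le u_m)\to1-H(\tau)$ with $H(\tau)=1-e^{-\theta\tau}$ in the periodic case and $H(\tau)=1-e^{-\tau}$ otherwise.

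The hard part is {\bf P6} in the periodic case. First, one must show that the ``far'' contributions to $q_{k,l}$ --- orbits that leave the shrinking target and come back many steps later, and branches of $\T^{-(k+1)}A_l$ away from the periodic orbit --- are $o(\hat\mu(A_l))$, and uniformly enough to survive summation over $k$; this is exactly where the exponential decay of correlations on $V_\a$ must be invoked uniformly in the shrinking parameter. Second, one must replace the $\hat\mu$-ratio defining $q_{p-1}$ by the Lebesgue volume ratio $|\det D\T^p(\oz)|^{-1}$, which is delicate because in the quasi-H\"older setting $\hat h$ only has controlled oscillation rather than continuity at $\oz$; this is absorbed by the bounded distortion of $\T^p$ together with the oscillation bound intrinsic to $V_\a$, or, if one prefers, by a mild regularity assumption on $\hat h$ at $\oz$ of the kind later imposed along the diagonal (cf.\ {\bf P8}). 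Verifying {\bf P3} for the $\ell^1$-ball $U^{(n)}_m$, by contrast, is routine given \eqref{EDC}.
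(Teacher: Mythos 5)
Your proposal is correct and follows essentially the route the paper itself takes: Proposition \ref{P1} is imported from Proposition 3.3 of \cite{AFV}, with the paper noting that its proof is exactly the spectral-approach computation of Section 4 (properties {\bf P1}--{\bf P6} for the hole $U^{(n)}_m$, the first-return reading \eqref{PIVA} of the $q_k$'s, and the expansion $1-\rho_m=\theta\,\hat\mu(U^{(n)}_m)(1+o(1))$ fed into \eqref{KT}), which is precisely what you reconstruct. Two cosmetic remarks: for the $\ell^1$-ball the boundary-collar estimate is $O(\eps\,\nu_m^{n-1})$ rather than $O(\eps\,\nu_m)$ (which only makes {\bf P3} easier), and your ``inclusion--exclusion'' argument for $q_{jp-1}=0$, $j\ge 2$, is more directly the observation that on the central branch $\T^{jp}\ox\in A_l$ forces $\T^{(j-1)p}\ox$ into the central component of $\T^{-p}A_l$, which by expansion lies inside $A_l$, contradicting first return at time $jp$.
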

We notice that eventually (repelling) periodic points fall in part (1). Our observable (\ref{O1}) satisfies the assumption of the Proposition. On the other hand, by using Theorem 1.7.13 in \cite{MRGH}, we
have a sufficient condition to guarantee the existence of the limit (\ref{TAU})  for $0<\tau<\infty.$ Such a condition requires that $\frac{1-F(x)}{1-F(x-)}\rightarrow 1$, as $x\rightarrow u_F,$ where $F$ is
 the distribution function of $X_0$, the term $F(x-)$ in the denominator denotes
the left limit of $F$ at $x$ and $u_F=\sup\{x: F(x) < 1\}$. For the
observable just introduced $u_F = \infty$ and if the probability  $\hat{\mu}$ is not atomic at $\overline{z}$, then it is easy to conclude that $F$ is continuous at $\overline{z}$ and therefore the above ratio goes to $1$.\\

This general result will not allow us to explicitly compute the sequence $u_m.$ Let us take the  affine sequence: $u_m=\frac{y}{a_m}+b_m$, with $a_m>0,$ and $y\in \mathbb{R}$. This suggests that we redefine $u_m(y)$ as a one parameter family in $y.$  When the sequence  $\hat{\mu}(M_m\le u_m)=\hat{\mu}(a_m(M_n-b_m)\le y)$ converges to  a non-degenerate distribution function $G(y)$, in the point of continuity of the latter, then we have an EVL.
 It is a beautiful result of EVT, just related to the affine choice for the sequence $u_n$ \footnote{For other choices of the sequence $u_n$, see \cite{MRGH}.},  that such a $G(y)$ could be only of three types, called Gumbel, Fr\'echet and Weibull, see \cite{MRGH}, and what determines it in a particular situation is just the common distribution given by the  function $F.$\\

 For instance and in our case, if we suppose that the invariant measure behaves like Lebesgue,  $\hat{\mu}(U^{(n)}_m)=O(\nu_m^{n})$\footnote{ Actually we have $\hat{\mu}(U^{(n)}_m)=O(2^n\nu_m^{n})$, but the factor $2^n$ will become negligeable by taking large $m.$}, then $e^{-u_m}\sim \left(\frac{\tau}{m}\right)^{\frac{1}{n}},$ or equivalently $u_m\sim \frac1n\log m-\frac1n \log \tau$ and therefore {\em the probability of the first localization after $m$ iterations with $m$ large and with accuracy $a_c$  of order $\left(\frac{\tau}{m}\right)^{\frac{1}{n}}$, is $e^{-\tau},$} or equivalently $e^{-e^{-y}},$ having set $\tau=e^{-y}.$ The distribution function $e^{-e^{-y}}, y\in \mathbb{R}$ is just the Gumbel law. In this easy example $a_m=n, b_m=\frac1n \log m,$ but we used very crude approximation in estimating the $\hat{\mu}$-measure of the parallelepiped $U^{(n)}_m$ since we simply forgot the local density of the measure at the point $\overline{z}.$ Very often it is a difficult task to get an explicit expression for the scaling coefficients $a_m, b_m$. In a few cases one succeeds, see the results in \cite{ERDS}, Propositions 7.2.4, 7.4.1, 7.5.1.  Otherwise and for practical purposes,  the distribution function $\hat{\mu}(M_m\le y)$ is modeled, for $m$ sufficiently large, by the so-called {\em generalized extreme value (GEV)} distribution which is a  function depending upon three parameters $\xi\in \mathbb{R}, \mu\in \mathbb{R}, \sigma>0:$
$
\text{GEV}(y;\mu,\sigma, \xi)=\exp\left\{-\left[1+\xi\left(\frac{y-\mu}{\sigma}\right)\right]^{-1/\xi}\right\}.$

The parameter $\xi$ is called the tail index. When it is $0,$ the GEV corresponds to the Gumbel type, when the index is positive, it corresponds to a Fr\'echet and finally when it is negative, it corresponds to a Weibull. The parameter $\mu$ is called the location parameter and $\sigma$ is the scale parameter: for $m$ large the scaling constant $a_m$ is close to $\sigma^{-1}$ and $b_m$ is close to $\mu.$ \\

 The proof of Proposition (\ref{P1}) can be done with  the SA or the  PPA approaches and  the latter uses the approximation of our process with an i.i.d.  process, this being guaranteed   by the exponential rate of mixing of the measure $\hat{\mu}$ on functions in $\mathcal{B}.$ It is interesting to point out the dichotomy in the choice of the target point $\overline{z}:$ there is only two functional expressions for the distribution $H(\tau)$ and what determines such a difference is the possible periodicity of $\overline{z}$.  We now focus on the EI $\theta$. Suppose we have successive  entrances in the neighborhood of $\overline{z}$, namely consecutive occurrences of an exceedance of our threshold $u_n.$ We interpret it as a memory of the underlying random process, and we quantify it with the parameter $\theta$. In particular, see \cite{ERDS}, p. 34, when $\theta > 0$ and for most of the times, the inverse of the EI  defines the mean
number of exceedances  in a cluster of large observations, i.e., is the
mean size of the clusters. We now show that in our model and whenever the number of components of the lattice goes to infinity, the EI of periodic points goes to $1$, so  there are no  clusters   in the limit of an infinitely large lattice.

We now have (from now on we write the EI as $\theta_n$ to signify the dependence on $n$)
\begin{prop}\label{pis}
Let $\hat{T}$ be the CML  with $n$ sites given by (\ref{TM}) and   take $\gamma<1-\lambda.$ Fix $p\ge 1$, if $\oz^{(p)}_n\in I^n$ is a periodic point of prime period $p,$ the EI $\theta_n(\oz^{(p)}_n)$ satisfies
$$
\lim\limits_{n\to\infty}\theta_{n}(\oz^{(p)}_n)=1.
$$
\end{prop}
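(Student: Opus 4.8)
The plan is to reduce the statement, via Proposition \ref{P1}, to an estimate on $|\det D(\hat{T}^p)(\oz^{(p)}_n)|$. The standing assumption $s_n<\lambda<1$ says that $\|D\hat{T}^{-1}\|<1$ everywhere, so $\hat{T}$ is uniformly expanding and every periodic point is repelling; hence part (2) of Proposition \ref{P1} applies and gives the closed form $\theta_n(\oz^{(p)}_n)=1-|\det D(\hat{T}^p)(\oz^{(p)}_n)|^{-1}$. It therefore suffices to show that $|\det D(\hat{T}^p)(\oz^{(p)}_n)|\to\infty$ as $n\to\infty$ with $p$ fixed.

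First I would compute the Jacobian of $\hat{T}$. Differentiating (\ref{TM}) entrywise gives $\partial_{x_k}\hat{T}(\ox)_i=\big[(1-\gamma)\delta_{ik}+\tfrac{\gamma}{n}\big]T'(x_k)$, that is $D\hat{T}(\ox)=C_n\,\mathrm{diag}(T'(x_1),\dots,T'(x_n))$ with $C_n:=(1-\gamma)I+\tfrac{\gamma}{n}J$ and $J$ the $n\times n$ all-ones matrix. Since $J$ has eigenvalue $n$ (eigenvector $\mathbf{1}$) and eigenvalue $0$ with multiplicity $n-1$, the matrix $C_n$ has eigenvalue $1$ (once) and eigenvalue $1-\gamma$ (multiplicity $n-1$), so $\det C_n=(1-\gamma)^{n-1}$ and $\det D\hat{T}(\ox)=(1-\gamma)^{n-1}\prod_{i=1}^n T'(x_i)$ (this is the determinant ratio referred to in Example \ref{EX0}). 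Applying the chain rule around the periodic orbit then yields $\det D(\hat{T}^p)(\oz^{(p)}_n)=(1-\gamma)^{p(n-1)}\prod_{k=0}^{p-1}\prod_{i=1}^{n}T'\big((\hat{T}^k\oz^{(p)}_n)_i\big)$. Since $T$ is piecewise expanding with $|DT^{-1}|\le\lambda$, one has $|T'|\ge\lambda^{-1}$ wherever defined, so the double product has modulus at least $\lambda^{-np}$ and
\[
|\det D(\hat{T}^p)(\oz^{(p)}_n)|\ \ge\ (1-\gamma)^{p(n-1)}\lambda^{-np}\ =\ (1-\gamma)^{-p}\Big(\tfrac{1-\gamma}{\lambda}\Big)^{pn}.
\]
By hypothesis $\gamma<1-\lambda$, hence $\tfrac{1-\gamma}{\lambda}>1$, and with $p\ge 1$ fixed the right-hand side tends to $+\infty$ as $n\to\infty$. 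Therefore $|\det D(\hat{T}^p)(\oz^{(p)}_n)|^{-1}\to 0$ and $\theta_n(\oz^{(p)}_n)\to 1$.

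I do not expect a serious obstacle; the only points needing a word of justification are that the spectral hypotheses of Proposition \ref{P1} hold for $\hat{T}$ (so that the EI formula is available) and that the periodic orbit stays off the measure-zero singular set of $\hat{T}$, so that the chain rule and the bound $|T'|\ge\lambda^{-1}$ genuinely apply at each iterate $\hat{T}^k\oz^{(p)}_n$ — both automatic for a repelling periodic orbit of a uniformly expanding map. The only real content is the linear-algebra identity $\det\!\big((1-\gamma)I+\tfrac{\gamma}{n}J\big)=(1-\gamma)^{n-1}$ combined with the elementary lower bound $|T'|\ge\lambda^{-1}$ and the hypothesis $\gamma<1-\lambda$.
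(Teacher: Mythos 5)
Your proposal is correct and follows essentially the same route as the paper: factor $D\hat{T}$ as the coupling matrix $(1-\gamma)I+\tfrac{\gamma}{n}J$ times $\mathrm{diag}(T'(x_i))$, compute $\det=(1-\gamma)^{n-1}\prod_i T'(x_i)$, apply the chain rule along the orbit, invoke the EI formula $\theta_n=1-|\det D\hat{T}^p(\oz^{(p)}_n)|^{-1}$, and use $|T'|\ge\lambda^{-1}$ together with $\gamma<1-\lambda$ to force the determinant to infinity. The only cosmetic difference is that you justify $\det C_\gamma=(1-\gamma)^{n-1}$ via the spectrum of $J$, whereas the paper leaves it as an exercise.
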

	\begin{proof} Recall the definition of the uncoupled dynamics
		\begin{eqnarray*}
		\T_0(\ox)=\big(T(x_{1}), T(x_{2}), \ldots, T(x_{n})\big)\qquad\forall\, \ox=(x_1,x_2,\dots,x_n)\in I^n,
	\end{eqnarray*}
		 and let $C_{\gamma}$ be the real $n\times n$ matrix, whose coefficients ${(C_\gamma)}_{ij}$ are defined by $(C_\gamma)_{ij}=\gamma/n$ if $i\neq j$ and
	$(C_\gamma)_{ij}=(1-\gamma)+\gamma/n$ if $i=j$. It is easy to check that $\hat{T}=\Phi_\gamma\circ \T_0$,
	where the coupling operator $\Phi_\gamma: I^n\to I^n$  is the linear map associated to the matrix $C_\gamma$ (i.e	$\Phi_\gamma(\ox):=C_\gamma \ox)$.

		Let $p\geq 1$, $\oz\in I^n$ and let us  compute the determinant of the Jacobian matrix of $\hat{T}^p$ evaluated in the point $\oz$. We have,
		$$
			\det(D_{\oz}\hat{T}^p)=\prod_{t=0}^{p-1}\det(D_{\hat{T}^{t}(\oz)}\hat{T})
			=\prod_{t=0}^{p-1}\det(D_{\T_0(\hat{T}^{t}(\oz))}\Phi_\gamma D_{\hat{T}^{t}(\oz)}\T_0)$$
$$
			=\prod_{t=0}^{p-1}\det(C_\gamma D_{\hat{T}^{t}(\oz)}\T_0)\\
			=\det(C_\gamma)^{p}\prod_{t=0}^{p-1}\det(D_{\hat{T}^{t}(\oz)}\T_0).
		$$
		It is an easy exercise in linear algebra to show that the determinant of the symmetric matrix $C_\gamma$ is $\det(C_{\gamma})=(1-\gamma)^{n-1}.$

		On the other hand $D_{\oz} \T_0$ is a diagonal matrix with diagonal entries $T'(z_1),\cdots,  T'(z_{n})$ and corresponding  Jacobian determinant   in $\oz$  given by $\det\big(D_{\oz}\T_0\big)=\prod^{n}_{k=1}T'(z_k)$. It follows that
		\[
		|\det\left(D_{\oz}\hat{T}^{p}\right)|=(1-\gamma)^{p(n-1)}\prod^{p-1}_{t=0}\prod^{n}_{k=1}|T'((\hat{T}^t(\oz))_k)|=(1-\gamma)^{p(n-1)}\prod^{p-1}_{t=0}\prod^{n}_{k=1}|T'(z^t_k)|.
		\]
		
		 According to \cite{AFV}, if $\oz^{(p)}_n$ is a periodic point of period $p$, the EI satisfies	
		$$\theta_{n}(\oz^{(p)}_n)=1-\frac{1}{|\det\big(D_{\oz^{(p)}_n} \hat{T}^{p}\big)|}.$$
		
		Since
		\[
		|\det\left(D_{\oz^{(p)}_n}\hat{T}^{p}\right)|\geq(1-\gamma)^{p(n-1)}(\frac{1}{\lambda})^{np}=\left(\frac{(1-\gamma)^n(\frac{1}{\lambda})^n}{(1-\gamma)}\right)^p,
		\]
		and as $\gamma<1-\lambda$,  we have that $\lim\limits_{n\to\infty}\theta_{n}(\oz^{(p)}_n)=1$.
	\end{proof}

\subsection{Random perturbations}
There is  another situation which produces an extremal index equal to $1$. We can perturb the map $\hat{T}$ with additive noise, see \cite{AFV},  by defining a family of maps $\T_{\o}=\T+\o$, with each vector  $\o$ belonging to the set $\Omega$ and chosen in such a way that each $\T_{\o}$ sends $I^n$ into itself. The iteration of $\hat{T}$ will be now replaced by the concatenation
\[
\hat{T}^n_{\oom}:=\hat{T}_{\o_n}\circ \cdots \circ \hat{T}_{\o_1},\text{ with } \ \oom:=(\o_1, \cdots, \o_n, \cdots)\in \Omega^{\mathbb{N}},
\]
and the $\o_k$ chosen in an i.i.d. way in $\Omega$ according to some (common) distribution $\mathbb{P}.$ If we now take any measurable real observable $\phi$, the process $\{\phi\circ \hat{T}^n_{\oom}\}_{n\ge 1}$ will be stationary with respect to the product measure $\hat{\mu}_s\times \mathbb{P}^{\mathbb{N}},$ where $\hat{\mu}_s$ is the so-called {\em stationary measure}, verifying,  for any real measurable bounded function $f$: $\int f d\hat{\mu}_s=\int f\circ \hat{T}_{\omega}d\hat{\mu}_s:$  see \cite{ERDS} Chapter 7 for a general introduction to the matter. We call the couple $\{\hat{T}_{\oom}, \hat{\mu}_s\times \mathbb{P}^{\mathbb{N}}\}$ a {\em random dynamical system}. In the framework of EVT we could therefore consider the process $\{X_{m,\oom}(\cdot)\}_{n\ge1}=\{\phi \circ \hat{T}^m_{\oom}(\cdot)\}_{n\ge1}$, where $\phi$ is the observable introduced in (\ref{O1}), and consider  accordingly the distribution of the maximum (\ref{max}) with respect to the probability measure $\hat{\mu}_s\times \mathbb{P}^{\mathbb{N}}.$ By adopting for $\hat{T}$ the same assumptions as in Proposition \ref{P1}, it is not difficult  to show that $\hat{\mu}$ is equivalent to Lebesgue and we finally  proved in \cite{AFV}, Corollary 4.4, that for {\em any} choice of the target point $\oz$, an extreme value distribution holds with $H(\tau)=1-e^{-\tau}.$

\section{Extreme values and synchronization}\label{EVTS}
 We now introduce a new observable which  allows us to consider synchronization of the $n$ components of an initial state  iterated by $\hat{T}.$ Let us therefore define
\begin{equation}\label{S1}
 \psi(\overline{x}):=-\log\{\max|x_i-x_j|, i\neq j: i,j=1,\dots,n\}
\end{equation}
 and consider the maximum
 $$
 M_m(\overline{x}):=\max\{\psi(\overline{x}), \psi(\T\overline{x}),\dots \psi(\T^{m-1}\overline{x})\}.
 $$
By adopting the point of view of EVT, we  fix again a  positive number $\tau$ and we  ask for a sequence $u_m$ for which the following limit exists
$
m\ \hat{\mu}(\psi>u_m)\rightarrow \tau, \ m\rightarrow \infty.
$
We say again that {\em the sequence $M_n$ has an Extreme Value Law}, if there exists a non-degenerate distribution function $H:\mathbb{R}\rightarrow [0,1],$ with $H(0)=0$ such that $
\hat{\mu}(M_m\le u_m)\rightarrow 1-H(\tau), \ m\rightarrow \infty.$
By using the expression of $\psi$ we can rewrite (\ref{TAU}) as
\begin{equation}\label{TAU2}
m\ \hat{\mu}(S^{(n)}_m)\rightarrow \tau
\end{equation}
\begin{equation}\label{set}
S^{(n)}_m:=\{\overline{x}\in I^n: \max_{i\neq j}|x_i-x_j|\le \nu_m\},\  \text{where} \ \nu_m:=e^{-u_m}
\end{equation}
and consequently (\ref{EVT}) can be restated as
\begin{equation}\label{EVT2}
\hat{\mu}(\overline{x}\in I^n: \T^k(\overline{x})\notin S^{(n)}_m, k=0,\dots,m-1)\rightarrow 1-H(\tau).
\end{equation}
The limit (\ref{EVT2}) could also be interpreted as the  {\em probability that the $n$ components have synchronized for the first time after $m$ iterations with accuracy $a_c$  of order $e^{-u_m}$}.

We cannot  use the PPA to prove the existence of the limit (\ref{EVT2}). The reason is that our new observable becomes infinite on a line (the diagonal), and for the moment rigorous results are avalaible when the set of points where the observable is maximised is at most countable, see \cite{FNE} for a discussion of these problems.

The SA will bypass that issue by using the  Banach space $\B$ given by quasi-H\"oder functions, since for such a space we can check properties {\bf P1}-{\bf P5}.  Nevertheless there is still a problem remaining, namely prove the existence of the limits (\ref{Q3}). We will return to that in the next section.

We now show how to get the asymptotic distribution functions of the extreme value theory by using the SA. Let us begin by rewriting
 the maximum given in (\ref{EVT2}) using the density $\hat{h}$ of the measure $\hat{\mu}$:
\begin{equation}\label{KT}
\hat{\mu}(M_n\le u_m)=\int \hat{h}(\overline{x}){\bf 1}_{(S^{(n)}_m)^c}(\overline{x}){\bf 1}_{(S^{(n)}_m)^c}(\T(\overline{x}))\dots {\bf 1}_{(S^{(n)}_m)^c}(\T^{m-1}(\overline{x}))d\Le=\int \tilde{P}^m_m(\hat{h}) d\Le,
\end{equation}
where, from now on,
$$
\tilde{P}_m(\cdot):=\P({\bf 1}_{(S^{(n)}_m)^c}\cdot).
$$
Notice that $(S^{(n)}_m)^c$ plays the role of the set $D_l$ in Section 2. By invoking the spectral representation (\ref{deco}) we have with obvious interpretation of the symbols
$$
\int \tilde{P}^m_m(\hat{h}) d\Le=\rho_m^m\int \hat{h}d\hat{\mu}_m+\int Q_m^m \hat{h}d\Le,
$$
where $\int \hat{h}d\hat{\mu}_m\rightarrow \int \hat{h}d\Le=1,$ as $m\rightarrow \infty,$ and the spectral radius of $Q_m$ is strictly less than $\rho_m.$
We now need  to bound $\rho_m,$ the largest eigenvalue of $\tilde{P}_m$, for increasing $m$ and it is given by  (\ref{Q1}).
Let us now denote the exponent $\theta_{\Delta}$  the EI along the diagonal set $\Delta:=\{\overline{x}\in \mathbb{R}^n; x_1=x_2,\cdots=x_n\}$ and its existence will follow if we prove  limit  (\ref{Q3}). We then write:
$$
1-\rho_m=\hat{\mu}(S^{(n)}_m) \theta_{\Delta} (1+o(1)), \ \text{in the limit} \ m\rightarrow \infty,
$$
then
\begin{equation}\label{PIV}
\int \tilde{P}^m_m(\hat{h}) d\Le=e^{-(\theta_{\Delta} m \hat{\mu}(S^{(n)}_m)+m o(\hat{\mu}(S^{(n)}_m))}\int \hat{h}d\hat{\mu}_m+O(\rho_m^{-m}||Q_m^m||_{\mathcal{B}})
\end{equation}
which converges to $e^{-\tau \theta_{\Delta}}$ under the assumptions on $\hat{\mu}_m$, the spectral radius of $Q_m$ and the condition (\ref{TAU2}). From now on  we will simply write $\theta_n$ for the EI along the diagonal set for lattices with $n$ components.\\

We now return to (\ref{EVT2}) since we now know that $1-H(\tau)=e^{-\theta_n \tau}.$ If we suppose that $\hat{\mu}(S^{(n)}_m)=O(\nu_m^{n-1})$\footnote{ Actually this is a very crude approximation. In fact what  is possible to prove easily is an upper bound on the Lebesgue measure of the domain $\{\overline{x}\in I^n, |x_i-x_j|<\nu_m: i\neq j\}$  which is simply $(2\nu_m)^{n-1}.$ We sketch the argument for $n=3$. In this case, the measure we are looking for is $\int dx_1 \int dx_2 {\bf 1}_{\{|x_1-x_2|\le \nu_m\}}(\ox) \int dx_3 {\bf 1}_{\{|x_1-x_3|\le \nu_m\}}(\ox){\bf 1}_{\{|x_2-x_3|\le \nu_m\}}(\ox).$ The last integral will contribute with $2\nu_m$ and so the second one.}, then $e^{-u_m}\sim \left(\frac{\tau}{m}\right)^{\frac{1}{n-1}}$ and therefore {\em the probability of the first synchronization after $m$ iterations  with accuracy $a_c\sim \left(\frac{\tau}{m}\right)^{\frac{1}{n-1}}$, is $e^{-\theta_n\tau}.$}\footnote{We defer to the discussion after Proposition 3.1 for the validity of this argument and its approximations.} If the components of the vector $\T^k(\overline{x})$ are seen as the positions of different particles on a  lattice, we have a quantitative estimate of the probability of synchronization of the lattice after a prescribed time and with a given accuracy.
\begin{example}\label{EX}

\begin{itemize}
\item
{\em Ex. 1.} Suppose  we use the data in Section 7, with an EI  $\theta_3\sim 1-(\frac{10}{27})^2\sim 0.86,$ having chosen $\lambda=1/3$ and $\gamma=0.1,$ and take $3$ particles  each living on the unit interval. If we want to synchronize them with a probability larger than $1/2$ and  an accuracy $a_c=0.01$ before $m$ iterations, then  we have to iterate the lattice around $m=8 \ 100$ times.
\item {\em Ex. 2.} Analogously, if we want to observe with a probability larger than $1/2$ the synchronization of $100$ particles  each living on the unit interval  with an accuracy $a_c = 0,01$ and  before $m$ iterations of the CML, then $m$ has to be larger than $100^{100}.$
\end{itemize}
\end{example}
\section{Computation of the extremal index}
The extremal index is given by formula (\ref{Q2}). Keller showed in \cite {KE} that it coincides with that given in Proposition \ref{P1} for the process $X_m=\phi\circ\hat{T}^m$ and the proof is exactly the computation we performed in the previous section.   As we said in the introduction, the rigorous computation of the EI for two coupled maps was given in \cite{CG}. Their map was slightly different from ours in the sense that for the $i$-th component the averaged  term $\frac{\gamma}{n}\sum_{j=1}^n T(x_j)$ does not contain the contribution of $T(x_i).$ They first observed that in (\ref{Q3}), all the $q_k$  but $q_0$, are zero due to the fact that the diagonal is invariant and $q_0$ reads:
\begin{equation}\label{q0}
q_0=\lim_{m\rightarrow \infty}\frac{\hat{\mu}(S^{(2)}_m\cap \T^{-1}S^{(2)}_m)}{\hat{\mu}(S^{(2)}_m)}
\end{equation}
This quantity  was explicitly computed  giving the formula \cite{CG}:
$$
            \theta_2=1-\frac{1}{1-2\delta}\frac{1}{\int \hat{h}(x,x)  dx}\int \frac{\hat{h}(x,x)}{|DT(x)|}dx,
            $$
where the density $\hat{h}$ has bounded variation and  for almost every $x\in I$ the
  value $\hat{h}(x, x)$ is the average of the limits of $\hat{h}(x-u, x+u)$ and $\hat{h}(x + u, x-u)$ as $u\rightarrow 0.$ \\

 We get a similar result  and still for $n=2$, with a modification due to the fact that
our map is different, see formula (\ref{EEIII}) in the remark below. Instead   the density along the diagonal is defined again as a bounded variation function.
It seems difficult to extend such a result in higher dimensions without
 much stronger assumptions. Before doing that, we will explore how the EI
 $\theta_n$ behaves for large $n$ in a quite general setting with the
 objective to show that for large $n$ such an index approaches $1$ and therefore
 the Gumbel's law will emerge as the extreme value distribution.
 \\We will  index with $n$ the invariant densities  $\hat{h}_n,$
   while we continue to use the symbol $\hat{\mu}$ for the invariant measure,
    despite the fact that $\hat{\mu}$ depend on $n$ too, {\em via} the density $\hat{h}_n.$
Our next objective is to show that all the $q_k$ but $q_0$ are zero. Such a result is
    claimed in \cite{CG} in dimension $2$ and  without proof; we sketch it  below for the reader's convenience in any dimension and asking for a few assumptions.

    We first notice that the quantities $q_{k,l}$ introduced in (\ref{Q3}), read:
\begin{equation}\label{PIVA}
q_{k,l}:=\frac{\int (\P-\tilde{P}_l)\tilde{P}_l^k(\P-\tilde{P}_l)(\hat{h})d\Le}{\hat{\mu}(D^c_l)}=\hat{\mu}_{D^c_l}
\{x\in D^c_l :{\bf t}_{D^c_i}(x)=k+1\}
\end{equation}
where $\hat{\mu}_{D^c_l}$  is the conditional measure to $D^c_l,$ and ${\bf t}_{D^c_i}(x)$ denotes the first return
 time of the point $x\in D^c_l$ to $D^c_l$ (we will come back on this equality in the next section). Additional properties are necessary; for that let us first  denote with $V_{\varepsilon}(\Delta)$ an $\varepsilon$-neighborhood
  of the diagonal $\Delta.$

\begin{itemize}
\item {\bf P01}: The boundaries of the domains of local injectivity $U_1,\cdots, U_q$ (see Section 2) are union of  finitely many discontinuity surfaces $\mathcal{D}_j, j=1,\cdots,p$\footnote{We observe that the map could be continuous on such boundaries, but the first derivative surely is not.}, which  are   co-dimension $1$ embedded submanifolds.  We denote by $\mathcal{D}$ the union of those discontinuity sets. Moreover $\forall \varepsilon>0$ and $k\in \mathbb{N},$  let us denote with $F_{d, \varepsilon, k}$ the set of points $x\in V_{\varepsilon}(\Delta)$ for which there is  a neighborhood $\mathcal{O}(x)$ such that $\mathcal{O}(x)\cup \Lambda\neq \emptyset,$  and  $\mathcal{O}(x)\cap (\mathcal{D}\cup \hat{T}^{-1}(\mathcal{D})\cup \cdots \hat{T}^{-k}(\mathcal{D}))=\emptyset.$  We require the existence of a constant $C_k$ independent on $\varepsilon$  such that $\hat{\mu}(F_{d, \varepsilon, k}^c)\le C_k \sigma(\varepsilon)\hat{\mu}(V_{\varepsilon}(\Delta))$, where $\sigma(\varepsilon)$ goes to zero when $\varepsilon\rightarrow 0.$\\

\item {\bf P02}: Let us denote with $G_{d, \varepsilon}$ the set of points in $V_{\varepsilon}(\Delta)$ for which   the   segment of minimal lenght connecting one of this point to the diagonal intersects
one component of $\hat{T}{\mathcal D}_j.$ For $\varepsilon$ small enough, we will assume that
  there is a constant $C_d$ independent of $\varepsilon$  such that $\hat{\mu}(G_{d, \varepsilon})\le C_d \ \kappa(\varepsilon) \ \hat{\mu}(V_{\varepsilon}(\Delta)),$ where $\kappa(\varepsilon)$ goes to zero when $\varepsilon\rightarrow 0.$
\end{itemize}
\begin{rmk}

 The condition {\bf P01} means that for a large portion of points in the vicinity  of the diagonal, we can find a neighborhood which intersects the diagonal but does not cross the discontinuity lines up to a certain order. The condition {\bf P02} means that the piece of $V_{\varepsilon}(\Delta)$ which is crossed by an element of $\hat{T}{\mathcal D}_j$ has a  length along the direction of $\Delta$ of order $\kappa(\varepsilon).$
Both situations happen when the crossing of the discontinuities are ``transversal": it is easy to produces pictures in dimension $n=2$ and $n=3.$ See Figure \ref{DIPO} for {\bf P01} and Figure \ref{DIS} for {\bf P02}. In both cases we took $\varepsilon$ small enough in such a way that the discontinuity behaves locally, when it intersects $V_{\varepsilon}(\Delta)$, as a line for $n=2$ and as a plane for $n=3$; moreover $\kappa(\varepsilon)=O(\varepsilon).$
We notice that condition  {\bf P02} requires the control only  of the first images of $\mathcal{D}$ and also it is not necessary if the map $\hat{T}$ is onto on each $U_l.$
\end{rmk}

We sketch the argument for $k=1$, the others being similar. By replacing $l$ with $m$ in (\ref{PIVA}) we show that:
\begin{lemma}
The quantity
\begin{equation}\label{FFF}
 \frac{\int (\P-\tilde{P}_m)\tilde{P}_m(\P-\tilde{P}_m)(\hat{h})fd\Le}{{\hat{\mu}(S_m^{(n)})}}=\frac{\hat{\mu}(S^{(n)}_m\cap\hat{T}^{-1}(S^{(n)}_m)^c
 \cap \hat{T}^{-2}S^{(n)}_m)}{\hat{\mu}(S_m^{(n)})}
\end{equation}
goes to zero when $m\rightarrow \infty.$
\end{lemma}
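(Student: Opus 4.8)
The plan is to read the left--hand side of \eqref{FFF} through the first--return--time identity \eqref{PIVA}: with $l$ replaced by $m$ (so that $D_l^c$ becomes $S^{(n)}_m$), that quotient equals $\hat\mu_{S^{(n)}_m}\{\ox\in S^{(n)}_m:\mathbf t_{S^{(n)}_m}(\ox)=2\}$, i.e. the proportion of $S^{(n)}_m$ that escapes at time $1$ and returns at time $2$. Writing $A_m:=S^{(n)}_m\cap\T^{-1}(S^{(n)}_m)^c\cap\T^{-2}S^{(n)}_m$, I want $\hat\mu(A_m)=o(\hat\mu(S^{(n)}_m))$. The mechanism is that $\D$ is invariant under $\T$ and \emph{transversally repelling}, so a point just expelled from the thin tube $S^{(n)}_m$ is expelled even farther at the next iterate and cannot return at time $2$ — unless it sits close to a discontinuity locus of $\T$ or $\T^{2}$, or its minimal segment to $\D$ meets an image of a discontinuity, which is exactly the information packaged in {\bf P01} and {\bf P02}. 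Accordingly I would split, for a fixed geometric constant $c$,
\[
\hat\mu(A_m)\ \le\ \hat\mu\bigl(A_m\cap F_{d,\nu_m,1}\cap\T^{-1}\bigl((G_{d,c\nu_m})^c\bigr)\bigr)\ +\ \hat\mu\bigl(F_{d,\nu_m,1}^c\bigr)\ +\ \hat\mu\bigl(S^{(n)}_m\cap\T^{-1}(G_{d,c\nu_m})\bigr),
\]
and show the first term is empty for large $m$ and the other two are $o(\hat\mu(S^{(n)}_m))$.

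For the first term I would fix the transversal picture: write $\ox=\bar x\,\one+\pi(\ox)$ with $\bar x=\tfrac1n\sum_ix_i$ and $\pi(\ox)$ in the hyperplane $H:=\{\xi:\sum_i\xi_i=0\}$ normal to $\D$, and observe that $\max_{i\ne j}|x_i-x_j|=\|\pi(\ox)\|_Q$, the gauge of the bounded convex centrally symmetric polytope $Q:=\{\xi\in H:\max_{i\ne j}|\xi_i-\xi_j|\le1\}$, so that $S^{(n)}_m=\{\ox:\|\pi(\ox)\|_Q\le\nu_m\}$. Since $\T(\D)\subseteq\D$, and at $\oy=(y,\dots,y)\in\D$ one has $D_{\oy}\T=T'(y)\,C_\gamma$ with $C_\gamma$ acting on $H$ as multiplication by $(1-\gamma)$ (an immediate computation; cf. the proof of Proposition~\ref{pis}), a first--order expansion of $\T$ along $\D$ gives, for every $\ox\in V_{C\nu_m}(\D)$ at which $\T$ is $C^{2}$ on a convex set meeting $\D$,
\[
\|\pi(\T\ox)\|_Q\ \ge\ \bigl((1-\gamma)|T'|-O(\nu_m)\bigr)\,\|\pi(\ox)\|_Q\ \ge\ \beta_m\,\|\pi(\ox)\|_Q ,
\]
and, by $|T'|\ge\lambda^{-1}$ and the hypothesis $\gamma<1-\lambda$, the amplification $\beta_m:=(1-\gamma)\lambda^{-1}-O(\nu_m)$ is $>1$ for all large $m$ (the same expansion gives $\|\pi(\T\ox)\|_Q\le C'\nu_m$, so $\T\ox$ stays in a comparable tube and the estimate can be iterated). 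Applying it twice to $\ox\in A_m$ and setting $d_k:=\|\pi(\T^{k}\ox)\|_Q$, one has $d_0\le\nu_m$, $d_1>\nu_m$, hence $d_2\ge\beta_md_1>\nu_m$, contradicting $\T^{2}\ox\in S^{(n)}_m$; so the first term is empty once $m$ is large.

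It remains to see that the two applications of the expansion estimate are legitimate off a small set, and that this set is $o(\hat\mu(S^{(n)}_m))$. For $\ox\in F_{d,\nu_m,1}$ there is a neighbourhood of $\ox$ meeting $\D$ and disjoint from $\mathcal D\cup\T^{-1}\mathcal D$, so both $\T$ and $\T^{2}$ are $C^{2}$ on it; and if in addition $\T\ox\notin G_{d,c\nu_m}$, the minimal segment from $\T\ox$ to $\D$ stays in a single image $\T U_l$, so the branch of $\T$ at $\T\ox$ reaches $\D$ — this last point being needed only when $\T$ is not onto on the $U_l$. By {\bf P01}, $\hat\mu(F_{d,\nu_m,1}^c)\le C_1\sigma(\nu_m)\hat\mu(V_{\nu_m}(\D))$; by {\bf P02} and the $\T$--invariance of $\hat\mu$, $\hat\mu(S^{(n)}_m\cap\T^{-1}G_{d,c\nu_m})\le\hat\mu(G_{d,c\nu_m})\le C_d\kappa(c\nu_m)\hat\mu(V_{c\nu_m}(\D))$. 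Finally, since $\hat h$ is bounded above and bounded below away from $0$ and $\D$ has codimension $n-1$ in $I^n$, one has $\hat\mu(V_{\varepsilon}(\D))\asymp\hat\mu(S^{(n)}_m)\asymp\nu_m^{\,n-1}$ whenever $\varepsilon\asymp\nu_m$, so both remaining terms are $\le\mathrm{const}\cdot(\sigma(\nu_m)+\kappa(c\nu_m))\,\hat\mu(S^{(n)}_m)$; dividing by $\hat\mu(S^{(n)}_m)$ and letting $m\to\infty$ gives the claim.

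The step I expect to be the real obstacle is making the transversal--expansion inequality uniform: one must control the Taylor remainder of $\T$ uniformly over the thin tube $V_{C\nu_m}(\D)$ as $\D$ runs through all the branches of $\T$, keeping the amplification bounded below by a constant strictly above $1$, and then match this with a clean accounting of which points of $S^{(n)}_m$ actually admit $C^{2}$ neighbourhoods of $\ox$ and of $\T\ox$ reaching $\D$. It is precisely this accounting that {\bf P01} and {\bf P02} are designed to supply. The higher--$k$ cases announced after the lemma then follow by the same scheme, applying the expansion estimate $k+1$ times and discarding $F_{d,\nu_m,k}^c$ and the analogous $G$--sets.
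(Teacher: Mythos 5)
Your proposal is correct in substance and uses the same overall scaffolding as the paper — decompose $A_m$ into a ``good'' set where the transition $S^{(n)}_m\to (S^{(n)}_m)^c\to S^{(n)}_m$ is geometrically impossible, plus two exceptional sets controlled by {\bf P01} and {\bf P02} and shown to be $o(\hat{\mu}(S^{(n)}_m))$ via the two-sided bounds on $\hat{h}$ — but the key step runs in the opposite direction from the paper's. The paper argues \emph{backwards}: assuming $\hat{T}^2(\ox)$ lies in the tube, it picks a nearby point $z\in\Delta$ whose connecting segment lies in the domain $D_*$ of the relevant inverse branch, and pulls back by $\hat{T}^{-1}_*$, which contracts by the global constant $s_n\le\lambda<1$ and preserves $\Delta$, forcing $\hat{T}(\ox)$ back into the tube and contradicting the middle condition. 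You argue \emph{forwards}, via the transversal multiplier $(1-\gamma)T'$ on the hyperplane $\sum_i\xi_i=0$: once expelled, a point is expelled further. The two mechanisms are dual expressions of the transversal hyperbolicity of the invariant diagonal, but the trade-off is real: the paper's backward version needs only the Lipschitz bound $s_n\le\lambda$ on the inverse branches, so there is no Taylor remainder and no risk of the amplification dipping to $1$ — precisely the uniformity issue you flag as your main obstacle; your forward version, in exchange, makes the repelling rate $(1-\gamma)/\lambda$ explicit (consistent with the extremal-index formula \eqref{EEIII}) and needs only $F_{d,\nu_m,1}$ rather than $F_{d,\nu_m,2}$. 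One caveat: {\bf P02} is stated for segments meeting $\hat{T}\mathcal{D}_j$, i.e.\ the boundaries of the domains of the \emph{inverse} branches, which is exactly what the paper's pull-back step requires; your forward step instead needs the segment from $\hat{T}(\ox)$ to $\Delta$ to avoid $\mathcal{D}$ itself so that the linearization of $\hat{T}$ is valid along it — a related but not identical condition, which you should either derive from your $F_{d,\nu_m,1}$ neighbourhood (as your remark about $\hat{T}(\mathcal{O}(\ox))\cap\mathcal{D}=\emptyset$ essentially does) or state as a mild variant of {\bf P02}.
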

\begin{proof}
 Let us  take a point $x\in F_{d, \varepsilon, 2}$. With these assumptions $\hat{T}$ and $\hat{T}^2$
are open maps on $\mathcal{O}(x)$. In particular, $\hat{T}^2(\mathcal{O}(x))$
will be included in the interior of one of the $U_l$ and
 it will intersect $\Delta$ by the forward invariance of the latter. We now suppose that $\hat{T}^2(x)$ is in $V_{\varepsilon}(\Delta)$ and we try to prove that $\hat{T}(x)$ must be in $V_{\varepsilon}(\Delta)$ too. Let us call $D_*$ the domain of the function  $\hat{T}^{-1}_*$, namely the inverse branch of the map sending $\hat{T}(x)$ to  $\hat{T}^2(x). $ If the distance between $\hat{T}^2(x)$ and any point $z\in \hat{T}^2(\mathcal{O}(x))\cap \Delta,$  such that the segment $[ \hat{T}^2(x), z]$ is included in $D_*,$ is less than $\varepsilon$, we have done since $\text{dist}(\hat{T}^{-1}_*(z),
 \hat{T}^{-1}_*(\hat{T}^2(x))=\text{dist}(\tilde{z},\hat{T}(x))\le \lambda \varepsilon,$  where $\tilde{z}=\hat{T}^{-1}_*(z)\in \Delta.$ Notice that such a point $z\in \Delta$ should not be necessarily in $\hat{T}^2(\mathcal{O}(x)),$ provided the segment $[ \hat{T}^2(x), z]\in D_*$ and $\text{dist}(z,\hat{T^2}(x))\le \ \varepsilon.$ What could prevent the latter conditions to happen is the presence of the boundaries of the domains of definition of the preimages of $\hat{T}$, which are the images of $\mathcal{D}.$
We should therefore avoid that $\hat{T}^2(x)$ lands in the set $G_{d, \varepsilon},$  which means we have to discard those points  $x\in V_{\varepsilon}(\Delta)$ which are in  $\hat{T}^{-2}G_{d, \varepsilon},$ and, by invariance, the measure of those point is bounded from above by $C_d \ \kappa(\varepsilon) \hat{\mu}(V_{\varepsilon}(\Delta)).$ We now choose $\nu_m<\varepsilon$ and work directly with the sets $S_m^{(n)}.$ The points which are not in $\hat{T}^{-2}G_{d, \nu_m }\cap S_m^{(n)}\cap F_{d, \nu_m, 2}^c $ gives zero contribution  to the quantity $\hat{\mu}(S^{(n)}_m\cap\hat{T}^{-1}(S^{(n)}_m)^c
 \cap \hat{T}^{-2}S^{(n)}_m)$, while the measure of the remaining points divided by $\hat{\mu}(S_m^{(n)})$ goes to zero for $m$ tending to infinity.

 \end{proof}

\begin{prop}\label{PROPO1}
Let us suppose our CML satisfies properties {\bf P1}-{\bf P5} on a Banach space $\B$ with $\lambda=\inf|DT|^{-1}<1-\gamma,$ the density $\hat{h}_n\in L^{\infty}$ and $\hat{h}_n^{(\mathrm{inf})}:=\inf_{I^n}\hat{h}_n>0.$ Then
$$ \limsup_{m\rightarrow \infty}\frac{\hat{\mu}(S^{(n)}_m\cap \hat{T}^{-1} S^{(n)}_m)}{\hat{\mu}(S^{(n)}_m)}\le\frac{
\lambda^{n-1}||\hat{h}_n||_{\infty}}{(1-\gamma)^{n-1}\hat{h}_n^{(\mathrm{inf})}}
$$
\end{prop}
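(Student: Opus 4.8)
The plan is to remove the density from the problem, then reduce to a Lebesgue-measure estimate in a tubular neighbourhood of the diagonal $\Delta$ and linearise $\hat{T}$ there.

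First I would use the two-sided density bound $\hat{h}_n^{(\mathrm{inf})}\le \hat{h}_n\le \|\hat{h}_n\|_\infty$ (a.e.), which gives
$$
\frac{\hat{\mu}(S^{(n)}_m\cap \hat{T}^{-1}S^{(n)}_m)}{\hat{\mu}(S^{(n)}_m)}\ \le\ \frac{\|\hat{h}_n\|_\infty}{\hat{h}_n^{(\mathrm{inf})}}\cdot\frac{\Leb(S^{(n)}_m\cap \hat{T}^{-1}S^{(n)}_m)}{\Leb(S^{(n)}_m)},
$$
so it suffices to show $\limsup_{m\to\infty}\Leb(S^{(n)}_m\cap \hat{T}^{-1}S^{(n)}_m)\big/\Leb(S^{(n)}_m)\le \lambda^{n-1}/(1-\gamma)^{n-1}$.

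Next I would choose coordinates adapted to $\Delta$. Split $\R^n=\Delta\oplus\Delta^\perp$ with $\Delta^\perp=\{\bar v:\sum_i v_i=0\}$, and for $\bar x$ near $\Delta$ write $\bar x=\bar\xi+v$ with $\bar\xi=(\xi,\dots,\xi)$, $\xi=\tfrac1n\sum_i x_i$ and $v\in\Delta^\perp$. Membership $\bar x\in S^{(n)}_m$ depends only on $v$ and reads $v\in\nu_m\mathcal K$, where $\mathcal K:=\{v\in\Delta^\perp:\max_{i\ne j}|v_i-v_j|\le1\}$ is a fixed bounded convex symmetric body, whence $\Leb(S^{(n)}_m)=\sqrt n\,\Leb_{n-1}(\mathcal K)\,\nu_m^{n-1}(1+o(1))$, the $o(1)$ absorbing the part of $S^{(n)}_m$ lying near $\partial I^n$ or near the finitely many branch-endpoint configurations of $T$, which has measure $O(\nu_m^n)$. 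Only the $q$ diagonal branches $A_i\times\cdots\times A_i$ of $\hat{T}$ meet $\Delta$, and on the interior of each of them $\hat{T}$ is a $C^2$ diffeomorphism with differential at $\bar\xi\in\Delta$ equal to $D_{\bar\xi}\hat{T}=T'(\xi)\,C_\gamma$ (exactly as computed in the proof of Proposition \ref{pis}). Since $C_\gamma=(1-\gamma)\mathrm{Id}+\tfrac\gamma n J$ preserves $\Delta$ and $\Delta^\perp=\ker J$ and acts on $\Delta^\perp$ as multiplication by $(1-\gamma)$, the transverse block of $D_{\bar\xi}\hat{T}$ is the scalar $(1-\gamma)T'(\xi)$. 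Consequently, for $\nu_m$ small, $\hat{T}(\bar x)\in S^{(n)}_m$ forces the transverse part of $\bar x$ to lie in $\frac{\nu_m}{(1-\gamma)|T'(\xi)|}\mathcal K\,(1+o(1))$, the $C^2$ remainder of $\hat{T}$ contributing only $O(\nu_m^2)$. Because $\lambda<1-\gamma$ together with $|T'|\ge1/\lambda$ gives $(1-\gamma)|T'(\xi)|>1$ and $\mathcal K$ is star-shaped about $0$, this scaled body sits inside $\nu_m\mathcal K$; hence the transverse slice of $S^{(n)}_m\cap\hat{T}^{-1}S^{(n)}_m$ above $\xi$ has $(n-1)$-measure $\le\Leb_{n-1}(\mathcal K)\big((1-\gamma)|T'(\xi)|\big)^{-(n-1)}\nu_m^{n-1}(1+o(1))$. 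A Fubini integration over $\xi$ along $\Delta\cap I^n$, using $\bigcup_i A_i=I$, then yields
$$
\Leb(S^{(n)}_m\cap\hat{T}^{-1}S^{(n)}_m)\ \le\ \frac{\sqrt n\,\Leb_{n-1}(\mathcal K)}{(1-\gamma)^{n-1}}\,\nu_m^{n-1}\Big(\int_I |T'(\xi)|^{-(n-1)}\,d\xi\Big)(1+o(1)).
$$

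Dividing by $\Leb(S^{(n)}_m)$ and bounding $|T'|^{-1}\le\lambda$ on the unit interval gives
$$
\limsup_{m\to\infty}\frac{\Leb(S^{(n)}_m\cap\hat{T}^{-1}S^{(n)}_m)}{\Leb(S^{(n)}_m)}\ \le\ \frac{1}{(1-\gamma)^{n-1}}\int_I|T'|^{-(n-1)}\,d\xi\ \le\ \frac{\lambda^{n-1}}{(1-\gamma)^{n-1}},
$$
which combined with the first step is the assertion. The bulk of the work --- and the main obstacle --- is making the $(1+o(1))$ factors rigorous and uniform: controlling the $C^2$ linearisation remainder of $\hat{T}$ along $\Delta$, carrying out the transverse slicing cleanly, and confirming that the pieces of $S^{(n)}_m$ near $\partial I^n$ and near the (finitely many) endpoints of the branches of $T$ are $O(\nu_m^n)$ and hence negligible. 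Since only a $\limsup$ upper bound is asked, each of these estimates is needed in one direction only, which is what lets the argument go through without any control on the trace of $\hat{h}_n$ on $\Delta$.
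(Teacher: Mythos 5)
Your argument is correct, and it reaches the bound by a genuinely different route from the paper. You first discard the density via the two-sided bound $\hat{h}_n^{(\mathrm{inf})}\le \hat{h}_n\le \|\hat{h}_n\|_{\infty}$ and then work purely with Lebesgue measure in the orthogonal splitting $\mathbb{R}^n=\Delta\oplus\Delta^{\perp}$, linearising $\hat{T}$ transversally to the invariant diagonal: since $C_\gamma$ acts on $\Delta^{\perp}$ as multiplication by $(1-\gamma)$, the transverse derivative at $\bar\xi\in\Delta$ is the scalar $(1-\gamma)T'(\xi)$, and slicing plus Fubini gives the factor $\bigl((1-\gamma)|T'(\xi)|\bigr)^{-(n-1)}$. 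The paper instead keeps the density inside the integral, writes the numerator using the one-dimensional transfer operators $P_2\circ\dots\circ P_n$ in the variables $x_2,\dots,x_n$ with $x_1$ as base point, observes that the constraints together with $\gamma<1-\lambda$ select a single preimage $T_j^{-1}x_k$ in each sum, bounds the resulting Jacobian factors by $\lambda^{n-1}$, and obtains the factor $(1-\gamma)^{-(n-1)}$ exactly from the affine change of variables $x_k'=(1-\gamma)x_k+\gamma T(x_1)$ comparing $\Leb(S^{(n)}_{m,\gamma})$ with $\Leb(S^{(n)}_m)$. Both proofs must excise $O(\nu_m)$-neighbourhoods of the branch endpoints and of $\partial I^n$, contributing $O(\nu_m^{n})=o(\nu_m^{n-1})$. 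The trade-offs: the paper's computation is exact in the transverse direction and needs no smoothness of $T'$ for this proposition (only the bound $|DT|^{-1}\le\lambda$ at the preimages), whereas your linearisation uses the $C^2$ (or at least uniform $C^1$) regularity on each branch to control the $O(\nu_m^2)$ remainder; in exchange, your route delivers the sharper intermediate bound $(1-\gamma)^{-(n-1)}\int_I|T'|^{-(n-1)}d\xi$, which already anticipates the exact limit computed in Propositions \ref{propp} and \ref{proppp}. As you note, the remaining work is only to make the $(1+o(1))$ factors uniform, which the $C^2$ assumption and the finiteness of the branches provide.
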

\begin{rmk}
The upper bound makes  sense of course  when the right hand side of the above inequality is less or equal to $1.$  Moreover the EI $\theta_n$ will converge to $1$, under the additional {\bf P0} assumptions,  when $n\rightarrow \infty$ if the ratio $\frac{ ||\hat{h}_n||_{\infty}}{\hat{h}_n^{(\mathrm{inf})}}$ does not grow faster than $\upsilon^{n-1}$ with $\upsilon>(\lambda/(1-\gamma))^{-1}.$
\end{rmk}
\begin{proof}
We start  by writing
  $$\hat{\mu}\big(\mathcal{S}^{(n)}_{m}\cap\hat{T}^{-1}\mathcal{S}^{(n)}_{m}\big)= \int_{I^n} d\overline{x} \hat{h}_n(\overline{x}) \textbf{1}_{\mathcal{S}^{(n)}_{m}}(x)\textbf{1}_{\mathcal{S}^{(n)}_{m}}(\hat{T}\overline{x})$$
  $$ =\int_I dx_{1}\int_{I^{n-1}} dx_{2}\dots dx_{n} \hat{h}_n(x_1,\dots,x_n){\textbf{1}}_{\mathcal{S}^{(n)}_{m}}(\overline{x})\cdot $$$${\textbf{1}}_{\mathcal{S}^{(n)}_{m}}\Big({\tiny(1-\gamma)T(x_1)+ \frac{\gamma}{n}\sum_{i=1}^{n}T(x_{i}),\dots, (1-\gamma)T(x_n) +\frac{\gamma}{n}\sum_{i=1}^{n}T(x_{i})}\Big).$$
  W now have  to reduce the domain of integration of $I\ni x_1$ in two steps: the first, changing $I$ into $I'_m,$ consists in removing intervals of length $2\nu_m$ on the left and on the right on each boundary point of the $A_l, l=1,\cdots, q.$ Clearly the difference between the integrals over $I$ and $I'_m$ will converge to zero when $m\rightarrow \infty$ since the integrand functions are bounded (remember the density is in $L^{\infty});$  this argument is  made more precise in Appendix 2 together with the reason of that reduction. For the moment we simply write ${\bf I}(I \setminus I'_m)$ for the integral over $I\setminus I'_m.$
	 	By introducing the operator $P_l$ acting on the variable $x_l$, $l\ge 2$, we could continue  as:	 	$$\hat{\mu}\big(\mathcal{S}^{(n)}_{m}\cap\hat{T}^{-1}\mathcal{S}^{(n)}_{m}\big)= {\bf I}(I\setminus I'_m)+ \int_{I'_m} dx_{1}\int_{I^{n-1}} dx_{2}\dots dx_{n} P_{2}\circ\dots\circ P_{n} \Big[\hat{h}_n(x_1,\dots,x_n){\textbf{1}}_{\mathcal{S}^{(n)}_{m}}(\overline{x})\Big]\cdot	
	 	 $$
	 	$$ {\textbf{1}}_{\mathcal{S}^{(n)}_{m}}\Big({\tiny(1-\gamma)T(x_1) +\frac{\gamma}{n}\big(T(x_{1})+x_{2}+\dots+x_{n}\big),\dots, (1-\gamma)x_n +\frac{\gamma}{n}\big(T(x_{1})+x_{2}+\dots+x_{n}\big)}\Big).$$
If we now introduce the sets
$$
S^{(n)}_{m,\gamma}(Tx_1)=\{(x_2,x_3,\cdots,x_n)\in I^n: \mid T(x_1)-x_{j}\mid  \leq  \frac{\nu_{m}}{1-\gamma}, j= 2,\dots,n,\
	 	    \mid x_{i}-x_{j}\mid  \leq  \frac{\nu_{m}}{1-\gamma}\quad i\not=j\not=1\} ,
$$
and
$$
S^{(n)}_{m}(x_1)=\{(x_2,x_3,\cdots,x_n)\in I^n: |x_1-x_j|\le \nu_m, j=2,\cdots,n, \mid x_{i}-x_{j}\mid  \leq  \nu_{m}\quad i\not=j\not=1\},
$$
we have
$$
\frac{\hat{\mu}(S^{(n)}_m\cap \hat{T}^{-1} S^{(n)}_m)}{\hat{\mu}(S^{(n)}_m)}\le
$$
$$
\frac{\int_{I'_m} dx_1\int_{S^{(n)}_{m,\gamma}(Tx_1)} dx_{2}\dots dx_{n} P_{2}\circ\dots\circ P_{n} \Big[\hat{h}_n(x_1,\dots,x_n){\textbf{1}}_{\mathcal{S}^{(n)}_{m}}(\overline{x})\Big]+{\bf I}(I\setminus I'_m)}{\int_{I'''_m} dx_1\int_{S^{(n)}_{m}(x_1)}dx_2\cdot dx_n\hat{h}(x_1,\cdots,x_n)}
$$
We reduced the domain of integration in the integral in the denominator from $I$ to $I'''_m:$ this kind of reduction will also affect $I'_m$ and it will be explained in the Appendix 2.
 Let us now consider for simplicity the structure of the operators when $n=3$:
	 	 $$P_{2}\circ P_{3}\big[\hat{h}_3(x_{1}, x_{2}, x_{3})\textbf{1}_{\mathcal{S}^{(3)}_{m}}(x_1,x_2,x_3)\big]=$$
\begin{equation}\label{SC}
\sum\limits_{j}\sum\limits_{k}\frac{\hat{h}_3(x_{1},T^{-1}_{j}x_{2}, T^{-1}_{k}x_{3})
         \textbf{1}_{\mathcal{S}^{(3)}_{m}}(x_{1},T^{-1}_{j}x_{2}, T^{-1}_{k}x_{3})}{\mid DT(T^{-1}_{j}x_2)\mid \mid DT(T^{-1}_{k}x_3)\mid}\textbf{1}_{TA_j}(x_2)\textbf{1}_{TA_k}(x_3),
         \end{equation}
	 	 where $\{A_k\}$ denotes the intervals of monotonicity of the map $T$.   The preceding  constraints and the assumption $\gamma<1-\lambda$ imply that:
	 	 $
	 	 \mid T_{j}^{-1}x_{2}-x_1 \mid <\nu_m,
	 	 	\mid T_{k}^{-1}x_{3} -x_1 \mid < \nu_m$.
Since
	 	  the original partition is finite, if we take first $m$ large enough and having removed the intervals of length $2\nu_m$ around the boundary point of the domain of monotonicity of $T$,  for any $x_1\in I'_m$ there will be only one preimage which can contribute in each sum.  By generalizing to $n$ components we could therefore bound the term (\ref{SC}) by $\lambda^{n-1}||h||_{\infty}$. Moreover a simple geometrical inspection shows that the Lebesgue measures of the sets $S^{(n)}_{m,\gamma}(Tx_1)$ and $S^{(n)}_{m}(x_1)$ are independent of the point $x_1$ and also the ratio of the two measures is independent of $m$ and  gives
 \begin{equation}\label{CAR}
 \frac{\Le (S^{(n)}_{m,\gamma})}{\Le(S^{(n)}_{m})}=\frac{1}{(1-\gamma)^{n-1}},
  \end{equation}
  see Appendix 2.
We therefore get
 \begin{equation}\label{ing of remp}
	 	\frac{\hat{\mu}\big(\mathcal{S}^{(n)}_{m}\cap\hat{T}^{-1}\mathcal{S}^{(n)}_{m}\big)}{\hat{\mu}(\mathcal{S}^{(n)}_{m})}\leq \frac{\Le (S^{(n)}_{m,\gamma})\lambda^{n-1}||\hat{h}_n||_{\infty}+{\bf I}(I\setminus I''_m)}{\Le(S^{(n)}_{m})\hat{h}_n^{(\text{inf})}}.
	 	 \end{equation}
We now notice that ${\bf I}(I\setminus I''_m)$ can be immediately bounded by $||\hat{h}||_{\infty}\Le (S^{(n)}_{m,\gamma}) \Le(I\setminus I''_m)$. This allows us to factorize the term $\Le (S^{(n)}_{m,\gamma})$ in the denominator and divide it by $\Le(S^{(n)}_{m}).$ By taking the $\limsup$ we finally get our result.
\end{proof}
We can now strengthen the previous result by adding  further assumptions.  We start first with a stronger  hypothesis on the invariant density which we will relax later on.
\begin{itemize}
\item {\bf P7} The density $\hat{h}$ is continuous on $I^n.$
\end{itemize}
This condition is for instance satisfied in the uncoupled case for smooth and locally onto maps $T$ of the unit circle.
\begin{prop}\label{propp}
Let us suppose that our CML satisfies properties {\bf P1}-{\bf P5} and {\bf P7} on a Banach space $\mathcal{B}$ with $\lambda=\inf|DT|^{-1}<1-\gamma,$  then
$$ \lim_{m\rightarrow \infty}\frac{\hat{\mu}(S^{(n)}_m\cap \hat{T}^{-1} S^{(n)}_m)}{\hat{\mu}(S^{(n)}_m)}=\frac{1}{(1-\gamma)^{n-1}}\frac{\int_I \frac{\hat{h}_n(x,\cdots, x)}{|DT(x)|^{n-1}}dx }{\int_I \hat{h}_n(x,\cdots,x)dx}.
$$
\end{prop}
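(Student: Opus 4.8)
The plan is to revisit the computation carried out in the proof of Proposition~\ref{PROPO1}, this time upgrading the one-sided bound to an exact asymptotics by using the continuity hypothesis {\bf P7}. First I would repeat the opening manipulations of that proof: write
\[
\hat{\mu}\big(S^{(n)}_{m}\cap\hat{T}^{-1}S^{(n)}_{m}\big)=\int_{I}dx_{1}\int_{I^{n-1}}dx_{2}\cdots dx_{n}\,\hat{h}_n(\overline{x})\,\mathbf{1}_{S^{(n)}_{m}}(\overline{x})\,\mathbf{1}_{S^{(n)}_{m}}(\hat{T}\overline{x}),
\]
restrict $x_1$ to $I'_m$ (removing $2\nu_m$-neighbourhoods of the branch endpoints of $T$, so the discarded piece $\mathbf{I}(I\setminus I'_m)$ is $O(\nu_m)$ times the relevant section and becomes negligible after division by $\hat{\mu}(S^{(n)}_{m})$, using $\hat h_n\in L^\infty$ and $\hat h_n^{(\mathrm{inf})}>0$), and then insert the transfer operators $P_2\circ\cdots\circ P_n$ in the variables $x_2,\dots,x_n$ exactly as in~(\ref{SC}). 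This turns $\mathbf{1}_{S^{(n)}_{m}}(\hat{T}\overline{x})$ into the indicator of $S^{(n)}_{m,\gamma}(Tx_1)$ and produces the preimage sum; as in Proposition~\ref{PROPO1}, for $x_1\in I'_m$ and $m$ large only the branch $A_{i(x_1)}$ containing $x_1$ survives, since a preimage in any other branch lies at distance $>\nu_m$ from $x_1$ and is killed by the inner indicator. Writing $w_l:=T^{-1}_{i(x_1)}x_l$, the numerator reduces to
\[
\int_{I'_m}dx_1\int_{S^{(n)}_{m,\gamma}(Tx_1)}dx_2\cdots dx_n\;\frac{\hat{h}_n(x_1,w_2,\dots,w_n)}{\prod_{l=2}^{n}|DT(w_l)|}\,\mathbf{1}_{S^{(n)}_{m}}(x_1,w_2,\dots,w_n)\;+\;\mathbf{I}(I\setminus I'_m).
\]

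The key point, and the place where the hypothesis $\lambda<1-\gamma$ really bites, is that on the whole domain $S^{(n)}_{m,\gamma}(Tx_1)$ the inner indicator is identically $1$. Indeed, since $T^{-1}_{i(x_1)}$ contracts by a factor at most $\lambda$, one has $|x_1-w_l|\le\lambda\,|Tx_1-x_l|\le\frac{\lambda}{1-\gamma}\nu_m<\nu_m$ and likewise $|w_i-w_j|\le\frac{\lambda}{1-\gamma}\nu_m<\nu_m$; the same estimate, together with $x_1\in I'_m$, shows $x_l\in\operatorname{int}(TA_{i(x_1)})$ for $m$ large, so $w_l$ is genuinely defined. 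Hence the inner $(x_2,\dots,x_n)$-integral is $\Le\big(S^{(n)}_{m,\gamma}(Tx_1)\big)$ times the integrand, and here {\bf P7} enters: every $w_l$ lies within $\nu_m$ of $x_1$ inside the single branch $A_{i(x_1)}$, so uniform continuity of $\hat{h}_n$ on the compact $I^n$ and continuity of $DT$ on the closed branch domains (the only place the $C^2$, or $C^{1+\alpha}$, assumption is used) give, uniformly in $x_1\in I'_m$,
\[
\hat{h}_n(x_1,w_2,\dots,w_n)=\hat{h}_n(x_1,\dots,x_1)+o(1),\qquad \prod_{l=2}^{n}|DT(w_l)|=|DT(x_1)|^{n-1}+o(1),
\]
with $|DT|$ bounded away from $0$. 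Thus the numerator equals $\int_{I'_m}\hat{h}_n(x_1,\dots,x_1)\,|DT(x_1)|^{-(n-1)}\,\Le\big(S^{(n)}_{m,\gamma}(Tx_1)\big)\,dx_1\cdot(1+o(1))$ up to the negligible term $\mathbf{I}(I\setminus I'_m)$.

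For the denominator I would argue symmetrically: $\hat{\mu}(S^{(n)}_{m})=\int_I dx_1\int_{S^{(n)}_{m}(x_1)}\hat{h}_n(x_1,x_2,\dots,x_n)\,dx_2\cdots dx_n$, and uniform continuity of $\hat{h}_n$ gives $\hat{\mu}(S^{(n)}_{m})=\int_I\hat{h}_n(x_1,\dots,x_1)\,\Le\big(S^{(n)}_{m}(x_1)\big)\,dx_1\cdot(1+o(1))$. Outside an $O(\nu_m)$-set of $x_1$ near $\partial I$, whose contribution is negligible in the ratio since $\hat{h}_n$ is bounded, both $\Le\big(S^{(n)}_{m,\gamma}(Tx_1)\big)$ and $\Le\big(S^{(n)}_{m}(x_1)\big)$ are independent of $x_1$, and by~(\ref{CAR}) their ratio is exactly $(1-\gamma)^{-(n-1)}$. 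Dividing numerator by denominator, the common Lebesgue factor and the $o(1)$'s disappear in the limit, leaving
\[
\lim_{m\to\infty}\frac{\hat{\mu}(S^{(n)}_m\cap\hat{T}^{-1}S^{(n)}_m)}{\hat{\mu}(S^{(n)}_m)}=\frac{1}{(1-\gamma)^{n-1}}\,\frac{\int_I\hat{h}_n(x,\dots,x)\,|DT(x)|^{-(n-1)}\,dx}{\int_I\hat{h}_n(x,\dots,x)\,dx}.
\]

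I expect the main difficulty to be bookkeeping rather than anything conceptual: one must make the reduction to a single preimage on $I'_m$, and the control of the three boundary layers (near $\partial A_i$, near $\partial I$, and in the definition of $S^{(n)}_{m,\gamma}(Tx_1)$), completely precise, and verify that each correction is $O(\nu_m)$ relative to $\hat{\mu}(S^{(n)}_m)$; this is essentially the content deferred to Appendix~2 for Proposition~\ref{PROPO1}, but here it must be redone with equalities rather than upper bounds. The one genuinely essential analytic input beyond this bookkeeping is the inequality $\lambda<1-\gamma$, which is precisely what forces the inner indicator to be identically $1$ on $S^{(n)}_{m,\gamma}(Tx_1)$ and lets the clean Lebesgue ratio~(\ref{CAR}) come through.
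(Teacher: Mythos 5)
Your proposal is correct and follows essentially the same route as the paper's proof: reduce to the single surviving inverse branch on $I'_m$, use uniform continuity of $\hat h_n$ ({\bf P7}) to freeze the density on the diagonal, control the Jacobian factor near $x_1$, factor out the Lebesgue measures of $S^{(n)}_{m,\gamma}$ and $S^{(n)}_{m}$, and invoke the ratio \eqref{CAR}. The only cosmetic difference is that you replace the paper's mean-value-theorem step (using the $C^2$ hypothesis) by uniform continuity of $DT$ on the closed branch domains, and you make explicit — correctly, via $\lambda<1-\gamma$ — that the inner indicator is identically $1$ on $S^{(n)}_{m,\gamma}(Tx_1)$, a point the paper uses implicitly when passing to \eqref{STOC}.
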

\begin{proof}
We will write the proof for $n=3$, the generalization being immediate, and this will allows us to use the simple formulas in the previous demonstration. By the same arguments in the latter and by denoting with $T_{x_1}^{-1}$ the inverse branch of $T$ such that $T_{x_1}^{-1}(T(x_1))=x_1,$ we have
\begin{equation}\label{STOC}
\hat{\mu}\big(\mathcal{S}^{(3)}_{m}\cap\hat{T}^{-1}\mathcal{S}^{(3)}_{m}\big)=\int_{I''_m} dx_1\int_{S^{(3)}_{m,\gamma}(Tx_1)}\frac{\hat{h}_3(x_1, T_{x_1}^{-1}x_2, T_{x_1}^{-1}x_3)}{|DT(T_{x_1}^{-1}x_2)||DT(T_{x_1}^{-1}x_3)|}dx_2dx_3+{\bf I}(I\setminus I''_m)
\end{equation}
and we have a lower bound for $\hat{\mu}\big(\mathcal{S}^{(3)}_{m}\cap\hat{T}^{-1}\mathcal{S}^{(3)}_{m}\big)$ without the ${\bf I}(I\setminus I''_m)$ term. We call ${\bf I}(I''_m)$ the first integral on the right hand side.

Since $\hat{h}_3$ is continuous on $I^3$  and therefore uniformly continuous, having fixed $\tilde{\eps}>0$, it will be enough to choose $\nu_m$ small enough (remember  that
$|T_{x_1}^{-1}x_2-x_1|\le \nu_m$, $|T_{x_1}^{-1}x_3-x_1|\le \nu_m$),   to have
$
\hat{h}_3(x_1, T_{x_1}^{-1}x_2, T_{x_1}^{-1}x_3)=\hat{h}_3(x_1,x_1,x_1)+O(\tilde{\eps}).
$

For the derivative we can use the fact that our map is $C^2$ on the interior of the $A_l, l=1,\cdots,q$ and extendable with continuity on the boundaries to get by the mean value  theorem
$$
DT(T_{x_1}^{-1}x_2)=DT(x_1)+D^2T(\hat{x}_2)|T_{x_1}^{-1}x_2-x_1|, \
DT(T_{x_1}^{-1}x_3)=DT(x_1)+D^2T(\hat{x}_3)|T_{x_1}^{-1}x_3-x_1|
$$
where $\hat{x}_2$ belongs to the interval with endpoints $T_{x_1}^{-1}x_2$ and $x_1$, and $\hat{x}_3$ belongs to the interval with endpoints $T_{x_1}^{-1}x_3$ and $x_1$ and these two intervals are in the domains where $T$ is locally injective.   By inserting these formulas in the definition of ${\bf I}(I''_m)$ we have:
$$
{\bf I}(I''_m)=\int_{I''_m} dx_1 \frac{\hat{h}_3(x_1,x_1,x_1)}{|DT(x_1)|^2}\int_{S^{(3)}_{m,\gamma}(Tx_1)}\frac{dx_2dx_3}{ [1+\frac{D^2T(\hat{x}_2)}{DT(x_1)}|T_{x_1}^{-1}x_2-x_1|][1+\frac{D^2T(\hat{x}_3)}{DT(x_1)}|T_{x_1}^{-1}x_3-x_1|]}+$$
$$
\int_{I''_m} dx_1 \frac{1}{|DT(x_1)|^2}\int_{S^{(3)}_{m,\gamma}(Tx_1)}\frac{O(\tilde{\eps})}{ [1+\frac{D^2T(\hat{x}_2)}{DT(x_1)}|T_{x_1}^{-1}x_2-x_1|][1+\frac{D^2T(\hat{x}_3)}{DT(x_1)}|T_{x_1}^{-1}x_3-x_1|]}dx_2dx_3
$$
We now rewrite the first summand as
$$
\mathcal{I}_{1,m}:=\Le(S^{(3)}_{m,\gamma})\int_{I''_m} dx_1 \frac{\hat{h}_3(x_1,x_1,x_1)}{|DT(x_1)|^2}\frac{1}{\Le(S^{(3)}_{m,\gamma})}
$$
\begin{equation}\label{C1}
\int_{S^{(3)}_{m,\gamma}(Tx_1)}\frac{dx_2dx_3}{ [1+\frac{D^2T(\hat{x}_2)}{DT(x_1)}|T_{x_1}^{-1}x_2-x_1|][1+\frac{D^2T(\hat{x}_3)}{DT(x_1)}|T_{x_1}^{-1}x_3-x_1|]}
\end{equation}
where we have suppressed the dependence on $Tx_1$ in the Lebesgue measure of the external $S^{(3)}_{m,\gamma},$ which are independent of $Tx_1$ when $x_1\in I''_m,$ and the second summand  as
$$
\mathcal{I}_{2,m}:=\Le(S^{(3)}_{m,\gamma})\int_{I''_m}dx_1\frac{1}{|DT(x_1)|^2}\frac{1}{\Le(S^{(3)}_{m,\gamma})}$$$$\int_{S^{(3)}_{m,\gamma}(Tx_1)}\frac{O(\tilde{\eps})\ dx_2dx_3}{ [1+\frac{D^2T(\hat{x}_2)}{DT(x_1)}|T_{x_1}^{-1}x_2-x_1|][1+\frac{D^2T(\hat{x}_3)}{DT(x_1)}|T_{x_1}^{-1}x_3-x_1|]}
$$
\\ Using same arguments we have:
$$
\hat{\mu}\big(\mathcal{S}^{(3)}_{m})=\Le(S^{(3)}_{m})\int_{I'''_m} dx_1 \hat{h}_3(x_1,x_1,x_1)\frac{1}{\Le(S^{(3)}_{m})}\int_{S^{(3)}_{m}(x_1)} dx_2dx_3+
$$
\begin{equation}\label{C2}
\Le(S^{(3)}_{m})\int_{I'''_m}dx_1\frac{1}{\Le(S^{(3)}_{m})} \int_{S^{(3)}_{m}(x_1)}O(\tilde{\eps})  dx_2dx_3+{\bf I}(I\setminus I'''_m)=\mathcal{I}_{3,m}+\mathcal{I}_{4,m}+{\bf I}(I\setminus I'''_m)
\end{equation}
and with a lower bound for $\hat{\mu}\big(\mathcal{S}^{(3)}_{m})$ without the ${\bf I}(I\setminus I'''_m)$ term.
Hence we get
$$
\frac{\mathcal{I}_{1,m}+\mathcal{I}_{2,m}}{\mathcal{I}_{3,m}+\mathcal{I}_{4,m}+{\bf I}(I\setminus I'''_m)}\le \frac{\hat{\mu}(S^{(n)}_m\cap \hat{T}^{-1} S^{(n)}_m)}{\hat{\mu}(S^{(n)}_m)}\le\frac{\mathcal{I}_{1,m}+\mathcal{I}_{2,m}+{\bf I}(I\setminus I''_m)}{\mathcal{I}_{3,m}+\mathcal{I}_{4,m}}
$$
As in the proof of Proposition \ref{PROPO1}, we have that ${\bf I}(I\setminus I''_m)\le ||\hat{h}||_{\infty}\Le (S^{(3)}_{m,\gamma}) \Le(I\setminus I''_m)$ and ${\bf I}(I\setminus I'''_m)\le ||\hat{h}||_{\infty}\Le (S^{(3)}_{m}) \Le(I\setminus I'''_m).$
We can then factorize in the numerator and in the denominator the Lebesgue measures of
 the sets $S^{(3)}_{m,\gamma}$ and $S^{(3)}_{m}$ and remember that $\frac{\Le (S^{(3)}_{m,\gamma})}{\Le(S^{(3)}_{m})}=\frac{1}{(1-\gamma)^{2}}.$ After this factorization and when $m$ goes to infinity, the remaining part of  $\mathcal{I}_{1,m}$ converges to  $\int_Idx_1\frac{\hat{h}_3(x_1,x_1,x_1)}{|DT(x_1)|^2}$ by the dominated convergence theorem and the fact that  $|T^{-1}_{x_1}x_j-x_1|\le \nu_m, j=2,3,$  while the remaining part of $\mathcal{I}_{2,m}$ converges to an $O(\tilde{\eps})$ term. Still after the previous factorization,  the remaining part of   $\mathcal{I}_{3,m}$ goes to $\int_I dx_1 \hat{h}_3(x_1,x_1,x_1),$ while the remaining part of $\mathcal{I}_{4,m}$ goes to an $O(\tilde{\eps})$ term. The result then follows sending $\tilde{\eps}$ to zero.
\end{proof}
It is possible to relax the continuity assumption {\bf P7} on the density by asking a much weaker property. It seems to us that this condition is natural, and probably unavoidable,  in the sense that it controls the oscillations  of the density in the neighborhood  of the diagonal.
\begin{itemize}
\item {\bf P8} Let us suppose the density $\hat{h}$ is in $V_{1}(I^n)$ and moreover
    $$
    \hat{h}_D:=\sup_{0<\eps\le \eps_0}\frac{1}{\eps}\int \text{osc}(\hat{h}, B_{\eps}(x,\cdots,x))) dx<\infty.
    $$
\end{itemize}
\begin{prop}\label{proppp}
Let us suppose that our CML satisfies properties {\bf P1}-{\bf P5} and {\bf P8} on the Banach space $\mathcal{B}=V_{1}(I^n)$ with $\lambda=\inf|DT|^{-1}<1-\gamma,$  then the statement in Proposition \ref{propp} holds.
\end{prop}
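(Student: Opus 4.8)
The plan is to repeat the proof of Proposition \ref{propp} essentially verbatim, the only change being that the one place where continuity ({\bf P7}) was invoked --- the replacement $\hat h_3(x_1,T^{-1}_{x_1}x_2,T^{-1}_{x_1}x_3)=\hat h_3(x_1,x_1,x_1)+O(\tilde\eps)$ coming from uniform continuity --- is now effected by the integrated-oscillation control provided by {\bf P8}. As in Proposition \ref{propp}, I would write everything for $n=3$, the general case being identical.

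First I would give a precise meaning to the diagonal trace $\hat h_n(x,\dots,x)$ occurring in the statement. All configurations appearing in the integrals of Proposition \ref{propp} satisfy $|T^{-1}_{x_1}x_j-x_1|\le\nu_m$, hence lie within Euclidean distance $C\nu_m$ of $(x_1,\dots,x_1)$ for a constant $C=C(n)$ (one may take $C=\sqrt n$); take $m$ large enough that $C\nu_m\le\eps_0$. Set
$$
\bar h_m(x):=\text{Esup}_{B_{C\nu_m}(x,\dots,x)}\hat h_n,\qquad \underline h_m(x):=\text{Einf}_{B_{C\nu_m}(x,\dots,x)}\hat h_n .
$$
As $\nu_m\downarrow0$, $\bar h_m$ is non-increasing and $\underline h_m$ non-decreasing, so both converge pointwise; by {\bf P8}, $\int_I(\bar h_m-\underline h_m)\,dx=\int_I\text{osc}(\hat h_n,B_{C\nu_m}(x,\dots,x))\,dx\le\hat h_D\,C\nu_m\to0$, so the two limits coincide a.e., their common value $\hat h_n(x,\dots,x)$ is bounded (quasi-H\"older functions lie in $L^\infty$), and $\bar h_m,\underline h_m\to\hat h_n(\,\cdot\,,\dots,\cdot\,)$ in $L^1(dx)$. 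By {\bf P5} one has $\hat h_n(x,\dots,x)\ge\hat h_n^{(\mathrm{inf})}>0$, so the denominator $\int_I\hat h_n(x,\dots,x)\,dx$ is strictly positive.

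Then I would reproduce, from the proof of Proposition \ref{propp}, the identity \eqref{STOC} and its counterpart for $\hat\mu(\mathcal S^{(3)}_m)$, the reductions of the $x_1$-domain to $I''_m,I'''_m$, the error terms ${\bf I}(I\setminus I''_m),{\bf I}(I\setminus I'''_m)$ (bounded using only $\hat h_n\in L^\infty$, exactly as there), and the $C^2$ Jacobian expansion $DT(T^{-1}_{x_1}x_j)=DT(x_1)+D^2T(\hat x_j)|T^{-1}_{x_1}x_j-x_1|$, whose correction factors are $1+O(\nu_m)$ uniformly. Inside \eqref{STOC} I replace the pointwise density by its sandwich $\underline h_m(x_1)\le\hat h_3(x_1,T^{-1}_{x_1}x_2,T^{-1}_{x_1}x_3)\le\bar h_m(x_1)$, legitimate for a.e.\ $(x_1,x_2,x_3)$ because the change of variables maps null sets to null sets. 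After dividing by $\Le(S^{(3)}_{m,\gamma})$ --- which absorbs the $(x_2,x_3)$-integration up to the correction factors --- and since the oscillation does not depend on $(x_2,x_3)$, the upper and lower bounds obtained for ${\bf I}(I''_m)/\Le(S^{(3)}_{m,\gamma})$ differ by at most $\int_{I''_m}|DT(x_1)|^{-2}(\bar h_m-\underline h_m)(x_1)\,dx_1+O(\nu_m)\le\lambda^2\hat h_D\,C\nu_m+O(\nu_m)\to0$. Hence both bounds converge, by dominated convergence (dominant $\lambda^2\|\hat h_n\|_\infty$, using $|T^{-1}_{x_1}x_j-x_1|\le\nu_m$ and $\bar h_m{\bf 1}_{I''_m}\to\hat h_3(\,\cdot\,)$ in $L^1$), to $\int_I\hat h_3(x,x,x)|DT(x)|^{-2}\,dx$; likewise $\hat\mu(\mathcal S^{(3)}_m)/\Le(S^{(3)}_m)\to\int_I\hat h_3(x,x,x)\,dx$. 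Dividing and using \eqref{CAR}, $\Le(S^{(3)}_{m,\gamma})/\Le(S^{(3)}_m)=(1-\gamma)^{-2}$, yields the asserted formula as $m\to\infty$.

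The main obstacle is purely the bookkeeping around the diagonal trace: one must check that the $L^\infty$-representative of $\hat h_n$ evaluated at a configuration $C\nu_m$-close to the diagonal is genuinely caught between $\underline h_m(x_1)$ and $\bar h_m(x_1)$ for a.e.\ such configuration, and that the one-dimensional integrated oscillation of {\bf P8} is precisely the quantity needed --- this works because that oscillation is constant along the $n-1$ transverse directions that get integrated out once one divides by $\Le(S^{(n)}_{m,\gamma})$. Everything else --- the domain reductions, the Jacobian expansion, the volume ratio \eqref{CAR}, and the final dominated-convergence passage --- is verbatim the proof of Proposition \ref{propp}.
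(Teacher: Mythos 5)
Your proposal is correct and follows essentially the same route as the paper: the proof of Proposition \ref{propp} is repeated verbatim, with the single uniform-continuity step replaced by the pointwise bound $|\hat h_3(x_1,T^{-1}_{x_1}x_2,T^{-1}_{x_1}x_3)-\hat h_3(x_1,x_1,x_1)|\le \mathrm{osc}(\hat h_3,B_{c\nu_m}(x_1,\dots,x_1))$, whose $x_1$-integral is $O(\nu_m)$ by {\bf P8}. Your additional care in defining the diagonal trace as the common a.e.\ limit of $\mathrm{Esup}$ and $\mathrm{Einf}$ over shrinking balls is a useful elaboration of a point the paper leaves implicit, but it does not change the argument.
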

\begin{proof}
The proof follows the line of Proposition \ref{propp}, with an essential change when we compare the density in the neighborhood of the point $(x_1,x_1,x_1)$. In fact, we can now write
$$
|\hat{h}_3(x_1, T_{x_1}^{-1}x_2, T_{x_1}^{-1}x_3)-\hat{h}_3(x_1,x_1,x_1)|\le \text{osc}(\hat{h}_3, B_{\nu_m}(x_1,\cdots,x_1)).$$ An quick inspection of the previous proof shows immediately that the integral
$\int_I dx_1\  O(\tilde{\epsilon})$ will be now replaced with $\int_I dx_1 \ \text{osc}(\hat{h}_3, B_{\nu_m}(x_1,\cdots,x_1)),$ and this last integral is bounded by $\hat{h}_D \nu_m,$ which goes to zero when $m$ tends to infinity.
\end{proof}
\begin{cor}
As a consequence of  Propositions \ref{propp} and \ref{proppp},  the extremal index $\theta_n$ for maps satisfying {\bf P0} too,  is given by
\begin{equation}\label{EEIII}
\theta_n=1-\frac{1}{(1-\gamma)^{n-1}}\frac{\int_I \frac{\hat{h}_n(x,\cdots, x)}{|DT(x)|^{n-1}}dx }{\int_I \hat{h}_n(x,\cdots,x)dx}.
\end{equation}
and it will converge to $1$ when $n\rightarrow \infty.$
\end{cor}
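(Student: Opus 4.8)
The plan is to assemble (\ref{EEIII}) from pieces already in place and then read off the large-$n$ behaviour by a crude pointwise estimate. Recall from Section 2 that, under {\bf P1}--{\bf P6}, the extremal index along the diagonal is $\theta_n = 1 - \sum_{k\ge 0} q_k$ with $q_k = \lim_{m\to\infty} q_{k,m}$, and that by (\ref{PIVA}) each $q_{k,m}$ equals the $\hat\mu$-measure, conditioned to $S^{(n)}_m$, of the set of points of $S^{(n)}_m$ whose first return to $S^{(n)}_m$ occurs at time $k+1$; in particular $q_{0,m} = \hat\mu(S^{(n)}_m\cap\hat T^{-1}S^{(n)}_m)/\hat\mu(S^{(n)}_m)$. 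The first step is to combine the Lemma above with its analogues for every $k\ge1$ (proved exactly as for $k=1$: one exploits the forward invariance of $\Delta$ and discards, via {\bf P01} and {\bf P02}, the points near the diagonal whose orbit re-enters $S^{(n)}_m$ after exactly $k+1$ steps, the exceptional set having measure $o(\hat\mu(S^{(n)}_m))$), to conclude $q_k = 0$ for all $k\ge1$, hence $\theta_n = 1 - q_0$.

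The second step evaluates $q_0 = \lim_{m\to\infty}\hat\mu(S^{(n)}_m\cap\hat T^{-1}S^{(n)}_m)/\hat\mu(S^{(n)}_m)$. Under {\bf P7} this is exactly Proposition \ref{propp}; under the weaker {\bf P8} it is Proposition \ref{proppp}; in either case the limit equals $\frac{1}{(1-\gamma)^{n-1}}\,\frac{\int_I \hat h_n(x,\cdots,x)\,|DT(x)|^{-(n-1)}\,dx}{\int_I \hat h_n(x,\cdots,x)\,dx}$, which is precisely the subtracted term in (\ref{EEIII}). The hypotheses of these propositions --- properties {\bf P1}--{\bf P5} (plus {\bf P7} or {\bf P8}) on the quasi-H\"older space and $\lambda = \inf|DT|^{-1} < 1-\gamma$ --- are the standing assumptions here, while the {\bf P0}-type conditions are what buy us {\bf P6}, i.e. the existence of the limits $q_k$ underlying the whole scheme.

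For the limit $n\to\infty$ the key observation is that one need not control the diagonal behaviour of the densities $\hat h_n$ at all: since $|DT(x)|\ge\lambda^{-1}$ for a.e. $x$, one has the pointwise bound $\hat h_n(x,\cdots,x)\,|DT(x)|^{-(n-1)}\le \lambda^{n-1}\hat h_n(x,\cdots,x)$, so the ratio of the two integrals in (\ref{EEIII}) is at most $\lambda^{n-1}$. Therefore
$$
0\ \le\ 1-\theta_n\ \le\ \left(\frac{\lambda}{1-\gamma}\right)^{n-1},
$$
the lower bound being immediate from $q_0\ge0$; since $\gamma < 1-\lambda$ forces $\lambda/(1-\gamma)<1$, the right-hand side tends to $0$ and $\theta_n\to1$.

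I do not anticipate a genuine difficulty: once Propositions \ref{propp}/\ref{proppp} and the vanishing of the $q_k$ are granted, the corollary is essentially bookkeeping. The only delicate point is the passage from the Lemma (treating $k=1$) to all $k\ge1$; but since $n$ is fixed before letting $m\to\infty$ and the series $\sum_k q_k$ is already known to converge in this framework, each $k$ may be handled separately and no uniformity in $k$ is required beyond that.
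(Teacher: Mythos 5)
Your proposal is correct and follows exactly the route the paper intends: $\theta_n=1-\sum_k q_k$ with $q_k=0$ for $k\ge 1$ via the Lemma (and its analogues under {\bf P01}--{\bf P02}), $q_0$ identified with the limit computed in Propositions \ref{propp}/\ref{proppp}, and the $n\to\infty$ limit from $\lambda/(1-\gamma)<1$. Your observation that the weighted average $\int \hat h_n(x,\dots,x)|DT(x)|^{-(n-1)}dx/\int \hat h_n(x,\dots,x)dx\le\lambda^{n-1}$ needs no control whatsoever on the diagonal traces of the $n$-dependent densities is a nice point worth making explicit, since the corresponding Remark after Proposition \ref{PROPO1} had to impose a growth condition on $\|\hat h_n\|_\infty/\hat h_n^{(\mathrm{inf})}$ precisely because only an upper bound, not the exact formula, was available there.
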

\subsection{Random perturbations} As for localization, we expect that the extremal index be one when we keep $n$ fixed and we add  noise to the system. In the paper \cite{AFV} we extended the SA to randomly perturbed dynamical systems, mostly with additive noise.  Even if we assume properties ({\bf P1})-({\bf P6}) on some Banach space $\B$, there will be a new  difficulty related to the computation of the quantities $q_k$ in (\ref{Q3}) in the random setting. Such a computation as it was done in  Proposition 5.3   in \cite{AFV} strongly relies on the fact that the observable becomes infinite in a single point, the center  of a ball: we do not know how to adapt it in the neighborhood of the diagonal $\Delta.$ We will present nevertheless numerical evidences in Section 7 that in presence of noise the EI is $1.$
\section{Distribution of the number of successive  visits}
We anticipated in the introduction that once the synchronization is turned on for the first time, it cannot last since almost every orbit is recurrent. However  the orbit $\hat{T}^n(\overline{x}_0)$ will visit for almost every point $\overline{x}_0$ infinitely often  the neighborhood of the diagonal. We could therefore  expect that the exponential law $e^{-\tau}$ given by the EVT describes the time between successive events in a Poisson process. To formalize this, let us take a neighborhood $S^{(n)}_{\varsigma}$ of the diagonal $\Delta$ with accuracy $a_c=\varsigma$  and introduce the following quantity (remember that the map $\hat{T}$ and the measure $\hat{\mu}$ depend on $n$ too):
$$
N^{(n)}_{\varsigma}(t)=\sum_{l=1}^{\left \lfloor{\frac{t}{\hat{\mu}(S^{(n)}_{\varsigma})}}\right \rfloor}{\bf 1}_{S^{(n)}_{\varsigma}}(\hat{T}^l(\ox)),
$$
where $\left \lfloor{\cdot}\right \rfloor$ is the floor function, and consider the following distribution
$$
\mathcal{N}(n,\varsigma,t,k):=\hat{\mu}(N^{(n)}_{\varsigma}(t)=k)
$$
If the target set was a ball of radius $\varsigma$ around a generic point $\overline{z}$ or a dynamical cylinder set converging to this point,  one can prove under the  mixing assumptions of our  paper,    that in the limit of vanishing radius or infinite length for the cylinder,   $\mathcal{N}(n,\varsigma,t,k)$ converges to the Poisson distribution $\frac{t^ke^{-t}}{k!},$ see for instance \cite{V1},\cite{V2}. Instead if we take the target point $z$ periodic of minimal period $q$, one get the so-called compound Poisson distribution, see  \cite{HV} and \cite{AJFM3}, which in our situation reads, for $k\ge 1:$
\begin{equation}\label{PO}
\mathcal{N}(n,\varsigma,t,k)= e^{-t(1-p)}\sum_{j=0}^kp^{k-j}(1-p)^{j+1}\frac{t^j(1-p)^j}{j!}\binom{k-1}{j-1}
\end{equation}
where
\begin{equation}\label{pp}
p=\frac{1}{|\det(D_{\overline{z}}\hat{T}^q)|}
\end{equation}

\begin{rmk}
We do not dispose for the moment of analogous formulas when a ball is replaced by a strip along our diagonal set $\Delta.$ To the best of our knowledge the only known  result  is in dimension $2$ for the {\em uncoupled} systems given by the direct product of two piece-wise expanding and smooth
maps of the circle, see \cite{CC}, and it is consistent with our results. Nevertheless a few preliminary considerations\footnote{At this regard see also the discussion in the last part of Section 7.} seem to indicate  that the compound distribution (\ref{PO}) still holds with $p$ in (\ref{pp}) replaced by $1-\theta_n$ in (\ref{EEIII}),  and more generally with the EI given by formulas (\ref{Q2}), (\ref{Q3}), with the quantities $q_{k,l}$ given by the right hand side of (\ref{PIVA}) when the transfer operator is not available. In particular one should recover a pure Poisson distribution when the size $n$ of the lattice tends to infinity.
\end{rmk}
\begin{example}(Ex. (\ref{EX}) revisited)
\begin{itemize}
\item Suppose we consider as in the example (\ref{EX}), Ex. 2,  $n=100$ particles living in the unit interval and take the accuracy $\varsigma=0.01.$ With that value of $n$ and taking the coupling $\gamma$ sufficiently small, we could consider that the previous number of visits $N_{\varsigma}^{(n)}(t)$ follows a Poisson distribution. Since the probability of entering the neighborhood of the diagonal is of order $100^{-100},$
the
probability to observe exactly $5$ synchronization events during $m$ iterations of the lattice is maximal for $m=5\ 100^{100}$ and is of  order  $18\%.$
\item If instead we consider Ex. 1 with $3$ particles and the same accuracy, the probability to observe $5$ synchronizations is maximal after  $50\ 000$ iterations and it is again of order  $18\%.$
    \end{itemize}
\end{example}
\begin{com}
In the case of large $n$ the extremely high number of iterations needed to get synchronization or a given number of successive  synchronizations   could surprise. One reason is surely due to the fact that we considered lattices which are globally coupled and we looked at global synchronization. It would be interesting, and it will be the objects of future investigations,  to explore CML where only the nearest-neighbors of a given site contribute to the coupling term ({\em diffusive coupling}), and also synchronization of the closest neighbors. About the latter we will give a few preliminary numerical results in the next section.
\end{com}
\section{Extensions and Numerical computations}

The goal of numerical computations  will be to show that in the situations considered above we have effective convergence toward an extreme value law and moreover the extremal index satisfies the behavior we predicted theoretically. We will be mostly interested in synchronization, since for localization we have plenty  of analytic results. But there is one aspect where the comparison with localization is particularly useful. In order to explain that, we first have to introduce a new observable to depict a different kind of synchronization.
\subsection{Local synchronization}
 Up to now synchronization  was defined by asking that all the components of the evolutionary state become close to each other with a given accuracy $a_c$. We could ask instead that each component synchronize only with the  close neighbors. This is done by introducing the following observable

\begin{equation}\label{S2}
 \Theta(\overline{x}):=-\log\{\max|x_i-x_j|, i\neq j: j=i\pm1\}
 \end{equation}
    (of course on the extreme points of the period of the lattice, $j$  will take only one value).
     We could generalize to more than one neighbor $j=i\pm 2, \pm3,$ etc., but we limit ourselves here to the case $\pm1.$ It is not immediately obvious to have a geometrical description of the set that the orbit will visit for the first time (and therefore to give analytic results  in terms of the EI), although the ``physical" interpretation will be the same, namely we get the probability that the lattice will have for the first time and after a given number of iterations $m$, all the components synchronized with the close neighbors and with a given accuracy $a_c$. We call this {\em local synchronization}, to distinguish   from the {\em global synchronization} described in the preceding sections. It seems intuitive  from a physical point of view, that for $m$ large enough and for a given accuracy $a_c$, the probability to get local synchronization for the first time (from now on we write it as $\mathbb{P}_1(\cdot)$ for the different cases), is larger than that to get global synchronization, $\mathbb{P}_1(\text{glob. sync.})\le \mathbb{P}_1(\text{loc. sync.}),$ and this will be confirmed by the numerical simulation as we will see in a moment. On the other hand as soon as the global synchronization occurs, all the components of the lattice will be aligned in a narrow strip around all of them, and this is close  to localization. Therefore we will expect that
 the probability to get localization is larger than the probability of global synchronization. This is also confirmed by an easy  application of the theory.   Suppose we fix $m$ and the accuracy $a_c$; we have also fixed $n$. By supposing a pure exponential law for the asymptotic distribution of the maximum, we have
    \begin{itemize}
    \item For localization: $a_c\sim (\frac{\tau}{m})^{\frac1n},$ which gives $\mathbb{P}_1 (\text{local.})\sim e^{-\tau}\sim e^{-ma_c^n}.$
        \item For global synchronization: $a_c\sim (\frac{\tau}{m})^{\frac{1}{n-1}},$ which gives $\mathbb{P}_1 (\text{glob. sync.})\sim e^{-\tau}\sim e^{-ma_c^{n-1}}.$
    \end{itemize}
    We see that $\mathbb{P}_1 (\text{glob. sync.})\le \mathbb{P}_1 (\text{local.}).$\\
\subsection{Blocks of synchronization}
    The observable (\ref{S2}) could be modified further by introducing a  new one which we are going to define. Let us first construct $N$ blocks of $L$ successive integer indices: $B_q:=\{i_q, \cdots, i_q+L\}$ and take these blocks disjoint and possibly scattered along the lattice.  Then we define:
    $$
    \Upsilon(\ox):=-\log\{\max|x_i-x_j|, i\neq j: (i,j)\in B_q, q=1,\cdots, N \}.
    $$
The distribution of the maximum of this observable will give us the probability that the particles in the $N$ blocks will synchronise for the first time with a given accuracy. On the other hand we do not require any synchronization of the particles outside those blocks. If such a limiting distribution would exist, it could be consistent with the appearance of {\em chimeras} in chains of coupled particles, namely patterns of synchronized sets which emerge as a consequence of the self-organization of the entire lattice, see e.g. \cite{NPRL}. If our claim would be confirmed, such a self-organization would be  another statistical property of chaotic systems with several degrees of freedom. \\

\subsection{Simulations}
Let us now analyze the results of numerical procedure. The experiment performed is the following: we consider the one-dimensional map  $T$ in (\ref{TM}) as
$$T(x)=3x \mod 1. $$
Once we have constructed the CML $\hat{T}$ we will perturb it with additive noise:
$$\T_{\o}(\overline{x})_i=\hat{T}(\ox)_i +\epsilon\omega_i\mod 1 $$ where $\epsilon$ is here the noise intensity and $\o$ with components $\omega_i$ is a random variable drawn from a uniform distribution between -0.5 and +0.5. The stationary measure
for such a map will be $L^1$ close to that for $\gamma=0$ which is the direct product of  the uniform Lebesgue measures on the unit circle for each component and this independently
of the value of $\epsilon.$ Let us  notice that we are considering now  a one-dimensional map on the circle. This is not a restriction to our previous considerations and moreover it allows us to define correctly the additive noise. Numerically we produce trajectories of $10^4$ iterations for $\gamma<2/3$ and 0.02 increments. The range $3<n<53$ is analyzed. We consider the two observables $\psi,$ see (\ref{S1}) and $\Theta,$  see (\ref{S2}), corresponding  to global and local synchronization cases respectively and in the following we will refer to them as the {\em global and local cases}.  We analyze also the role of small noise $\epsilon=10^{-4}$ and moderate noise $\epsilon=10^{-2}$.\\

We first assess the convergence of the  maxima of $\psi$ and $\Theta$ to the Gumbel law by analyzing the tail index $\xi,$ see Section 3.  Here we chose to consider the complementary approach to the block-maxima selection, i.e. the peak over threshold. The two approaches are equivalent in chaotic systems as shown in \cite{ERDS}. The maxima of the observable are defined as the exceedances over the 0.98 quantile of $\psi$ and $\Theta$ distributions. If a good convergence towards the Gumbel law is reached, then $\xi \simeq 0$. The values of $\xi$  as a function of $\gamma$ and $n$ are reported in Figure \ref{CLM_csi}. A maximum likelihood estimator has been used for computation. The left panels show the global case $\psi$ while the  local case  $\Theta$ is reported on the right. From top to bottom we switch on the noise.  In general, the convergence towards the Gumbel law is satisfactory although some differences exist between global and local cases. For the global case the convergence is slower as the global synchronization event  is more rare then the local one.

 Moreover, the quality of the fits is lower when $n$ and $\gamma$ are larger. The addition of noise helps the convergence to the Gumbel law as for the systems analyzed  in \cite{ERDS}.\\

We now study the implications of global and local synchronization  on the  extremal index $\theta$. For the analysis presented in this paper, we adopt the  estimator  by
S\"uveges (see the book \cite{ERDS} for explanation and to retrieve the codes for the computation). For fixed
quantile $q$, S\"uveges' estimator reads:

$$ \theta=\frac{\sum_{i}^{N_c} (1-q)S_i +N+N_c- \sqrt{ \left(\sum_{i}^{N_c} (1-q)S_i  +N+N_c\right)^2  -8Nc\sum_i^{N_c} (1-q) S_i  }}{2\sum_{i}^{N_c} (1-q) S_i },$$

where $N$ is the number of recurrences above the chosen quantile, $N_c$ is the number of observations which form a
cluster of at least two consecutive recurrences, and $S_i$ the length
of each cluster $i$. From the numerical point of view, this estimator is the expected value of the compound distribution $\mathcal{N}(n, \varsigma, t, k)$ with $S_i$ being the empirical equivalent of the quantity $N_{\varsigma}^{(n)}(x)$.

We begin by checking the theoretical results predicted in Remark 5.4: for the 3$x$ mod 1 map, $\theta_n$ can now be estimated by taking the trace of the density on the diagonal reasonably of order $1$  in (\ref{EEIII}), so that in dimension 2:
$
\theta_2\sim 1- \frac{1}{(1-\gamma)}\frac{1}{3}
$
and in dimension $3$:
$
\theta_3\sim 1- \frac{1}{(1-\gamma)^2}\frac{1}{9}.
$

The comparison between the theoretical curves and the numerical computations are shown in Figure  \ref{d32}. For each case $n=2,3$ and $\gamma<2/3$ we produce 10 simulations of the map consisting of $10^4$ iterations and we estimate the extremal index as a function of $\gamma$. The numerical estimates indeed match the theoretical curves (bold magenta lines).\\

We now check the asymptotic formula for large $n$ and still with the same assumption on the trace of the density, namely $\theta_n\sim  1-(\frac{\lambda}{1-\gamma})^{n-1},$ with $\lambda=1/3.$ For each $3<n<53$ and $\gamma<2/3$ we perform one simulation of the deterministic 3$x$ mod 1 map and compare the obtained extremal index $\theta_n$ with the previous asymptotic formula. Results are shown in Figure \ref{CLM_t}. There is indeed very good agreement between our asymptotic and numerical results. The largest divergence is obtained for $\gamma\simeq 2/3$ which correspond to the limit value for the map.\\

We then perform a  numerical analysis of the extremal index in the cases not covered by the theory, namely for the observable $\Theta.$  The results are presented in Figure  \ref{CLM}. The top-left panel is repeated for convenience and show the global case results. The latter show that global and local cases are substantially different. For the global, the synchronization depends on both $n$ and $\gamma$: in particular, it is easier to synchronize systems with $n$ small because the probability of finding all the particles in the same state decreases quickly with $n$.  On the other hand, in the local  case the extremal index $\theta_n$ is substantially independent of $n$. In fact, whether $n$ is small or large, the particle \textit{sees} only the nearest neighbors for synchronization so that it is insensitive to the size of the lattice. The only dependence left is in $\gamma$: in particular, for all $n$, we see the dependence is compatible with the case $n=2$ of the global coupling case: $\theta_2\sim 1- \frac{1}{(1-\gamma)}\frac{1}{3}$. The addition of the noise destroys clusters as observed in \cite{AFV}. Qualitatively, the structure of the extremal index is quite robust with respect to small perturbations.   To fully destroy the clusters, large intensity of the noise are needed. The results for $\epsilon=10^{-4}$ also demonstrate that our results are stochastically stable because one   recovers the deterministic structure of the extremal index for low noise values.\\

Although the numerical estimates of the extremal index are done by computing the expected values of the compound Poisson distribution (S\"uveges' estimator), we can also check that the waiting times $q_{k,\varsigma}$\footnote{We now index this quantity with the size $\varsigma \rightarrow 0$ of the neighborhood of the diagonal.}  defined in (\ref{PIVA}),  between consecutive entrances in the neighborhood of the diagonal with accuracy $\varsigma,$  provides the same information. Actually this is what we get for recurrence in balls as we discussed above, see \cite{HV} and \cite{AJFM3}.   Therefore we give some examples of time series of  $\psi$ and $\Theta$ in Figures \ref{global_Poisson} and \ref{local_Poisson} respectively. The noise increases from top to bottom. The histograms of the waiting times in cluster are normalized to sum-up to $1$ (empirical probability density function EPDF) and are in $y$-$\log$ scale. No clustering corresponds to  an exponential law (sequence of linearly decreasing boxes in $\log$ scale), whereas the clustering case is characterized by an higher EPDF for lower waiting times. As one can see from the deterministic cases, the higher the EPDF for short waiting times, the lower $\theta$. Effectively the fraction of waiting times equals to $1$ which exceed the standard exponential law is exactly the extremal index  $\theta$. We stress again that although we cannot demonstrate this identity theoretically, the numerical evidence suggests that one can use directly $q_{k,\varsigma}$ as defined in (\ref{PIVA}),  for the estimation of the extremal index $\theta$.\\
\section {Appendices}
\subsection{Appendix 1: proof of (\ref{EDC})}
The argument is the following. The quantity we are interested in is bounded by $\int_{I^n}d\ox |\prod_{i\neq j}{\bf 1}_{\{|x_i-x_j|<\nu_l+\eps\}}(\ox)-\prod_{i\neq j}{\bf 1}_{\{|x_i-x_j|<\nu_l-\eps\}}(\ox)|.$ If at least one factor in  the first  product is zero, the same is true for the second product, so we will suppose that all the factors in the first product are $1$. Therefore the difference of the two products will be maximum if at least one factor in the second product is zero. There will be at most $\sum_{k=1}^n \binom{k}{n}$ such possibilities. We now proceed with a very rough bound. Each term in $\binom{k}{n},$ with $1\le k\le (n-1)$ contributes with $k$ measures of values $4^k\eps^k$ and with $(n-k)$ measures of values $4^{n-k}\nu_l^{4-k},$ having chosen $\eps<\nu_l$. When $k=n$ we simply write $\eps^n<\eps^{n-1}\nu_l.$ In conclusion, we bound the quantity we are interested in by $\eps \nu_l C_n,$ with $C_n=4^n\sum_{k=1}^n  \binom{k}{n}.$
\subsection{Appendix 2: proof of (\ref{CAR})}
 Take for simplicity $n=2.$ There is in fact dependence of the two sets on $x_1$ since they intersect $I^2\ni (x_2, x_3)$ and as a consequence their measure will  depend on the location of $x_1.$ It  will  therefore be enough to evaluate  the external integrals in $x_1$ on a even smaller domain  $I''_m\subset I'_m$ and on $I'''_m$ in the   denominator, in such a way they will not  contain a (disconnected) neighborhood $\mathcal{U}$ of $0$ and $1$ and its preimages $T^{-1}\mathcal{U}.$   As a consequence, we can keep the full amount of the area of the two sets $S^{(2)}_{m,\gamma}(Tx_1)$ and $S^{(2)}_{m}(x_1),$ which from now on we simply write as $\Le (S^{(2)}_{m,\gamma})$ and $\Le(S^{(2)}_{m}).$ Clearly the difference between the integrals over $I$ and $I''_m, I'''_m$ will converge again  to zero when $m\rightarrow \infty.$
 About the other issue: write $S^{(n)}_{m,\gamma}(Tx_1)$ as the integral of obvious characteristic functions in the variables $x_2, \dots, x_n.$ Then make the change of variables:  $x_k'=x_k (1-\gamma)+\gamma T(x_1)$, in this way we get the measure of $S^{(n)}_{m}(x_1)$  multiplied by $(1-\gamma)^{1-n}.$
\section*{Acknowledgments}

SV and PG were supported by  the MATH AM-Sud Project “Physeco”. SV was supported  by  the Leverhulme Trust  for support thorough the Network Grant IN-2014-021  and by the project APEX “Syst\`emes dynamiques: Probabilit\'es et Approximation Diophantienne
PAD” funded by the R\'eegion PACA (France). DF was partially supported by the ERC grant A2C2 (No. 338965). PG thanks FONDECYT project 1171427.  SV warmly thanks J. M. Freitas, P. Giulietti, N. Haydn and B. Saussol  for illuminating discussions.
We finally thank the referees for the careful reading of the paper which helped us to greatly improve it.

\clearpage

\begin{figure}[t]
\includegraphics[width=0.97\textwidth]{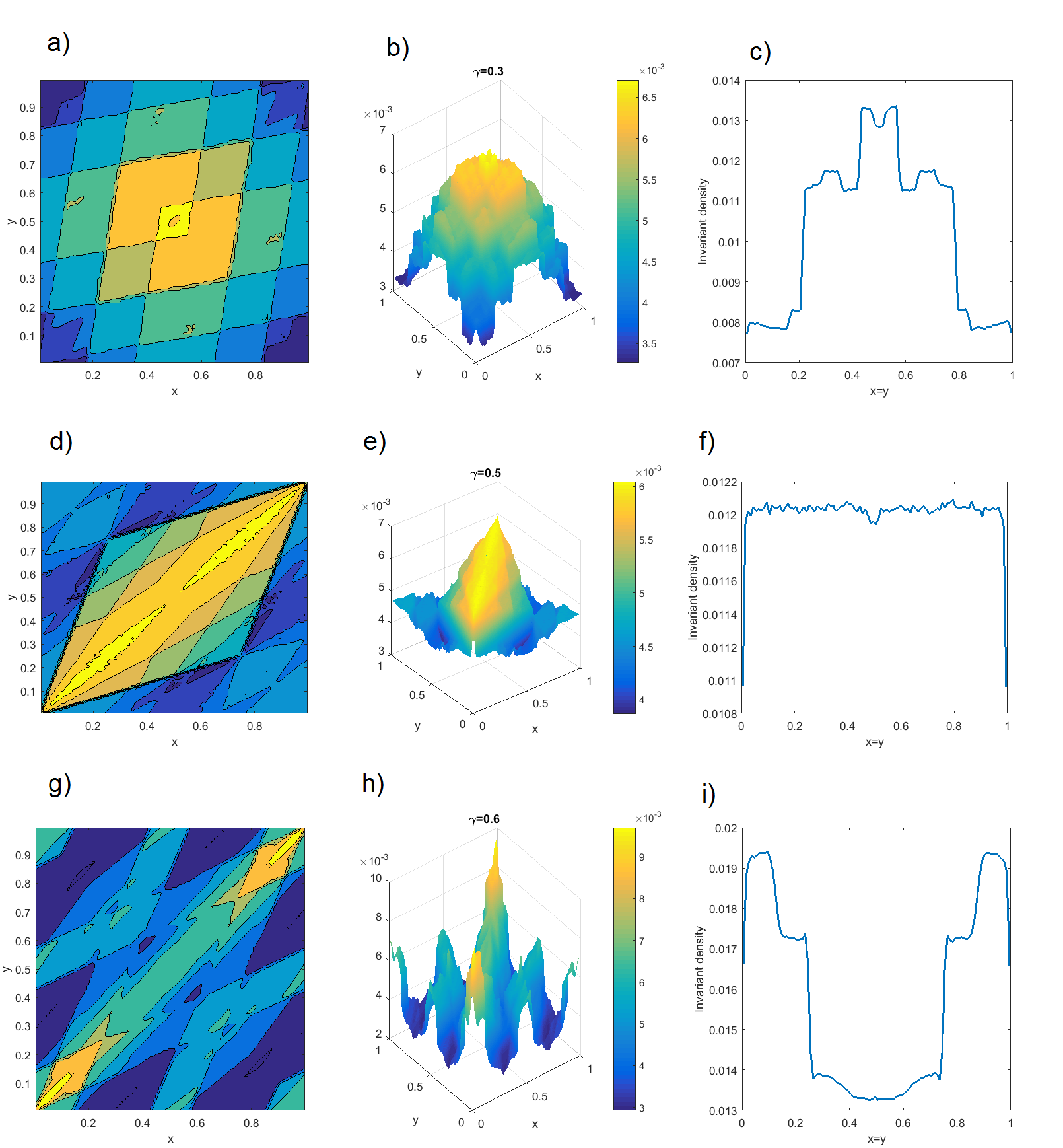}
\caption{Invariant density for the map \ref{EX0} with $n=2$ for $\gamma=0.3$ (a,b,c),  $\gamma=0.5$ (d,e,f), $\gamma=0.6$ (g,h,i). The plots show the density in colorscale (a,d,g) with a view from the top and (b,e,h) for a three dimensional view. The plots (c,f,i) show the behavior of the map on the diagonal. The figure is obtained by averaging the density over 300 realizations each consisting of $10^7$ iterations of the trajectory.}
\label{dens1}
\end{figure}

\begin{figure}[t]
\includegraphics[width=0.97\textwidth]{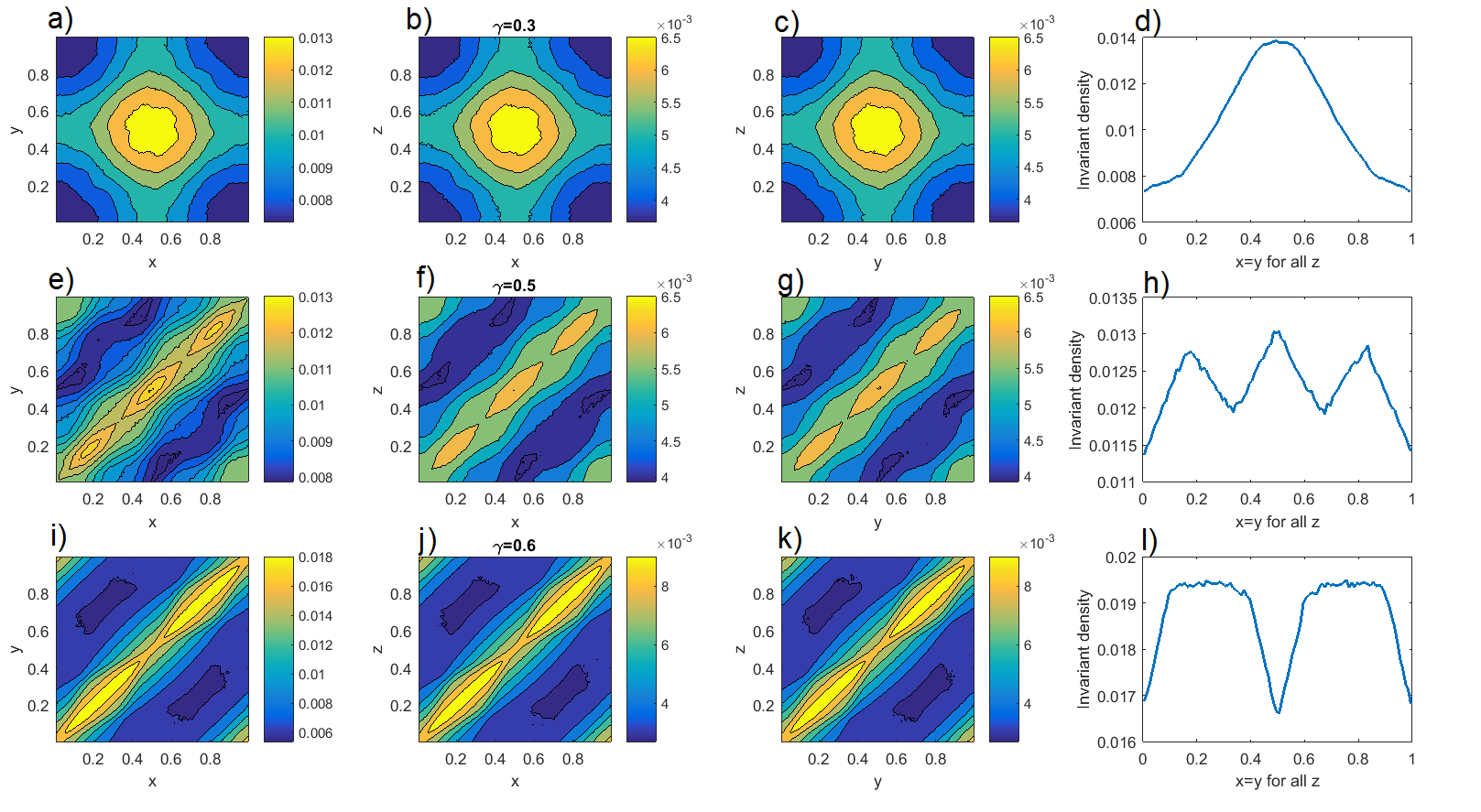}
\caption{Invariant density for the map \ref{EX0} with $n=3$ for $\gamma=0.3$ (a,b,c,d),  $\gamma=0.5$ (e,f,g,h), $\gamma=0.6$ (i,j,k,l). The plots show the density in colorscale (a-c,e-g,i-k). The plots (d,h,l) show the behavior of the map on the diagonal $x=y$. The figure is obtained by averaging the density over 300 realizations each consisting of $10^6$ iterations of the trajectory.}
\label{dens}
\end{figure}

\begin{figure}[t]
\includegraphics[width=0.47\textwidth]{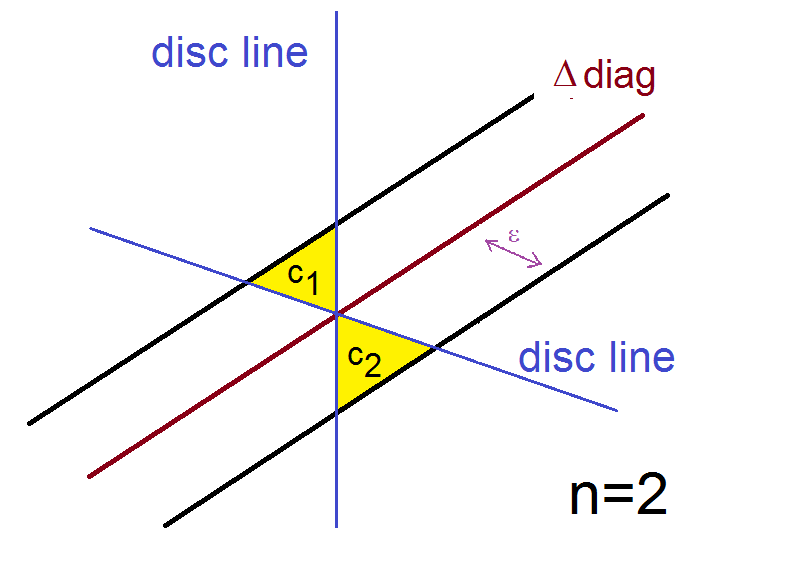}\includegraphics[width=0.47\textwidth]{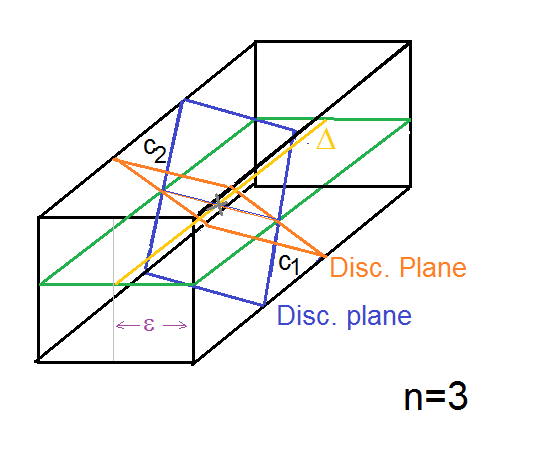}
\caption{Crossing of the discontinuity line, $n=2$ (left)  and surface, $n=3$ (right),  of the neighborhood of the diagonal $\Delta,$ for Property {\bf P01}. $C_1$ and $C_2$: triangular (left) and pyramidal (right) regions belonging respectively to $F^c_{d, \varepsilon, 2}$ and $F^c_{d, \varepsilon, 3}$. We remove the shaded regions on the left and the pyramidal regions $C_1$ and $C_2$ on the right.}
\label{DIPO}
\end{figure}

\begin{figure}[t]
\includegraphics[width=0.97\textwidth]{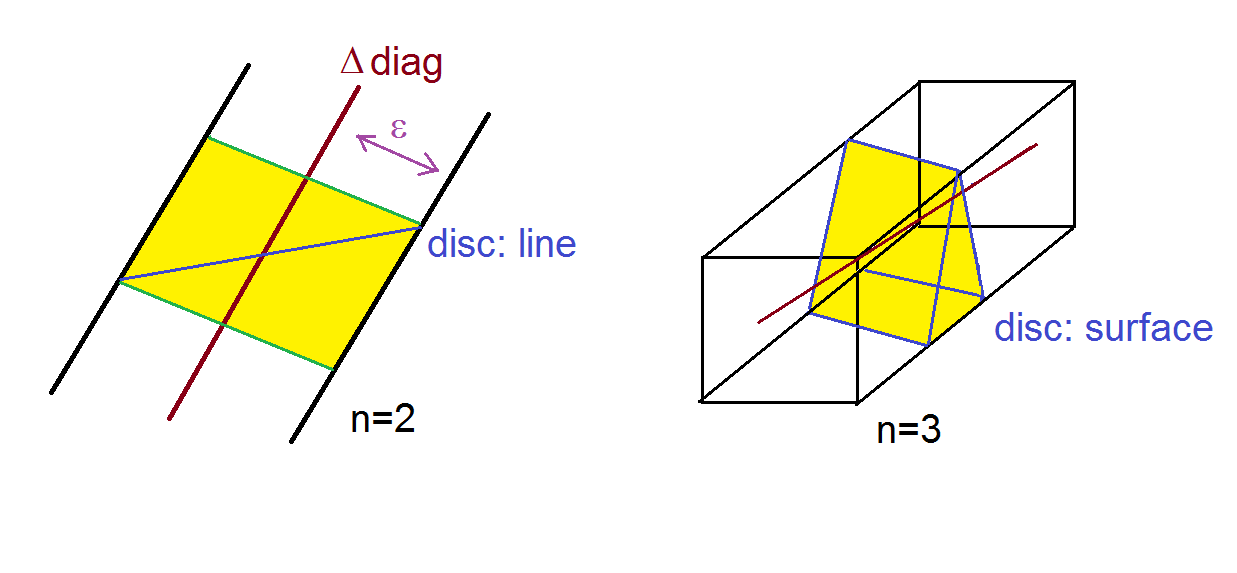}
\caption{Crossing of the discontinuity line, $n=2$ (left)  and surface, $n=3$ (right),  of the neighborhood of the diagonal $\Delta,$ for  Property {\bf P02}. We remove the shaded regions.}
\label{DIS}
\end{figure}

\begin{figure}[t]
\includegraphics[width=0.97\textwidth]{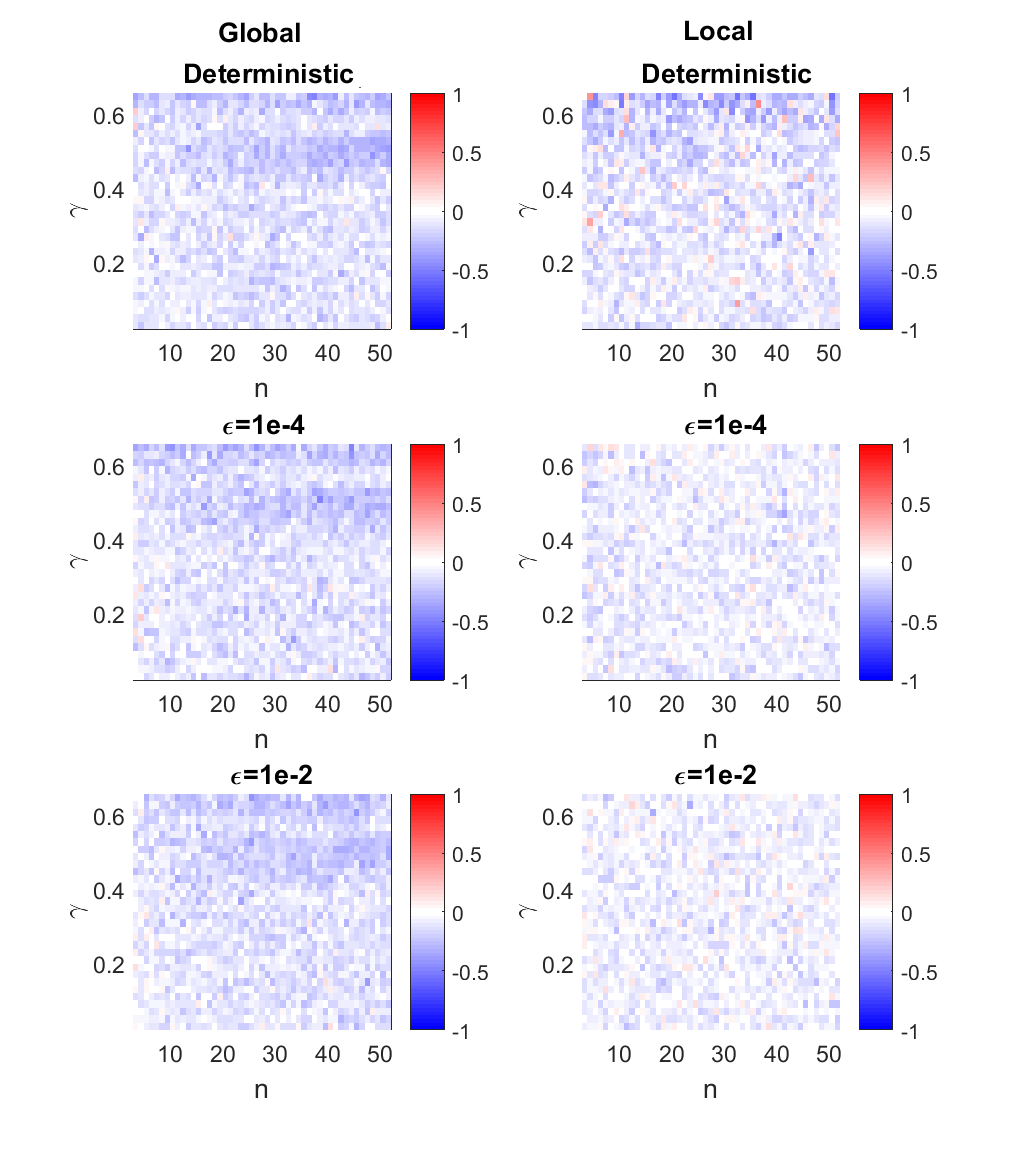}
\caption{Shape parameter $\xi$ of the Generalized Pareto distribution as a function of the number of variables $n$ and the coupling parameter $\gamma$. Left: global case $\psi$. Right: local case $\Theta$. From top to bottom: Deterministic, additive noise with intensity $\epsilon=10^{-4}$, additive noise with intensity $\epsilon=10^{-2}$.}
\label{CLM_csi}
\end{figure}

\begin{figure}[t]
\includegraphics[width=0.47\textwidth]{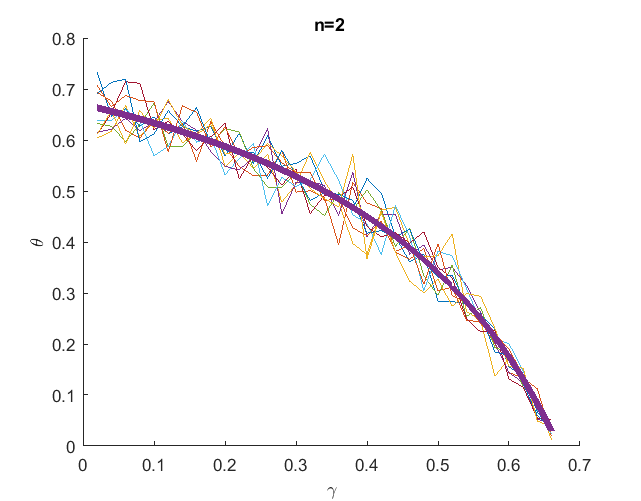}\includegraphics[width=0.47\textwidth]{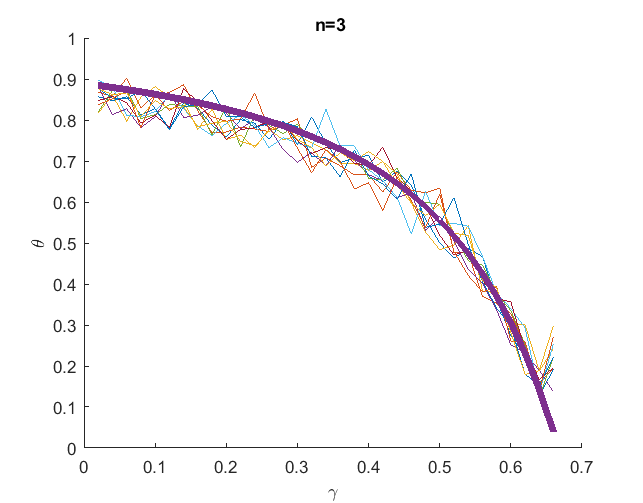}
\caption{Extremal index $\theta$ of the Generalized Pareto distribution as a function of the  coupling parameter $\gamma$. Thin lines indicate estimates for $10$ different realization of the maps $3x$-mod$ 1$. Bold magenta lines indicate the expected theoretical values. Left: $n=2$, Right: $n=3$.}
\label{d32}
\end{figure}

\begin{figure}[t]
\includegraphics[width=0.67\textwidth]{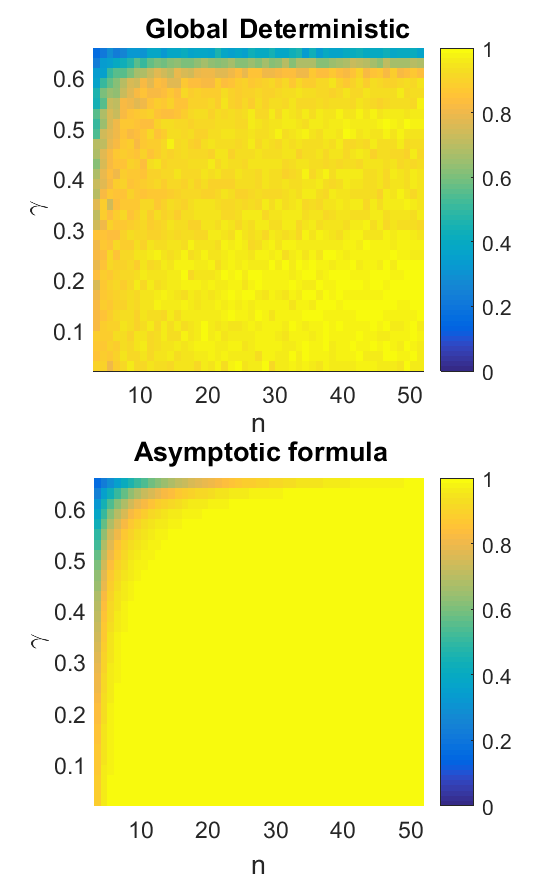}
\caption{Extremal index $\theta$ as a function of the number of variables $n$ and the coupling parameter $\gamma$. Top:  global case $\psi$. Bottom: theoretical asymptotic formula.}
\label{CLM_t}
\end{figure}

\begin{figure}[t]
\includegraphics[width=0.97\textwidth]{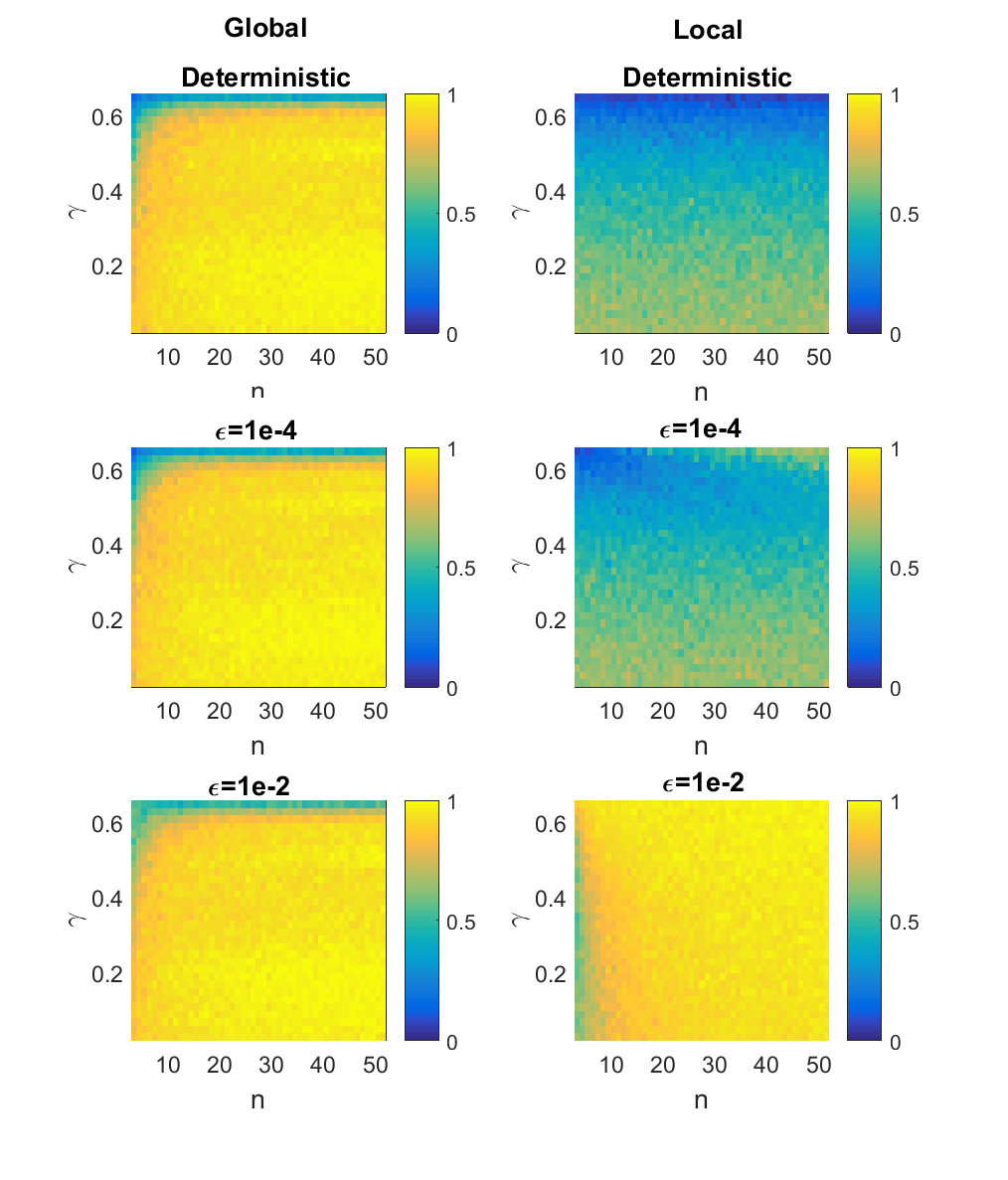}
\caption{Extremal index $\theta$ as a function of the number of variables $n$ and the coupling parameter $\gamma$. Left:  global case $\psi$. Right: local case  $\Theta$. From top to bottom: Deterministic, additive noise with intensity $\epsilon=10^{-4}$, additive noise with intensity $\epsilon=10^{-2}$.}
\label{CLM}
\end{figure}

\begin{figure}[t]
\includegraphics[width=0.97\textwidth]{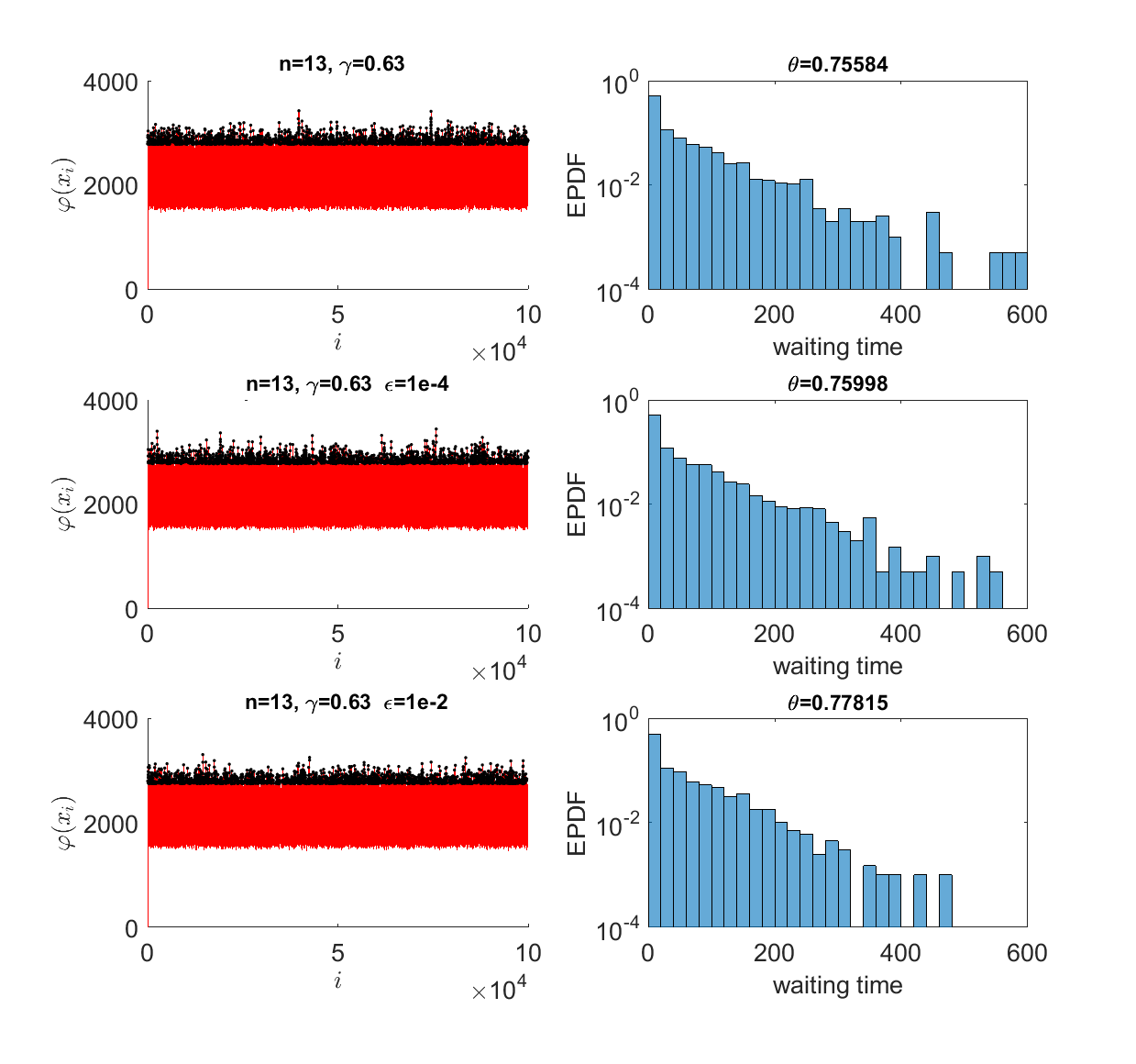}
\caption{Example of global case. Right: $\psi$  time series (red) and exceedances (black). Left: empirical probability distribution (EPDF) of waiting time in the clusters. From top to bottom: Deterministic, additive noise with intensity $\epsilon=10^{-4}$, additive noise with intensity $\epsilon=10^{-2}$.}
\label{global_Poisson}
\end{figure}

\begin{figure}[t]
\includegraphics[width=0.97\textwidth]{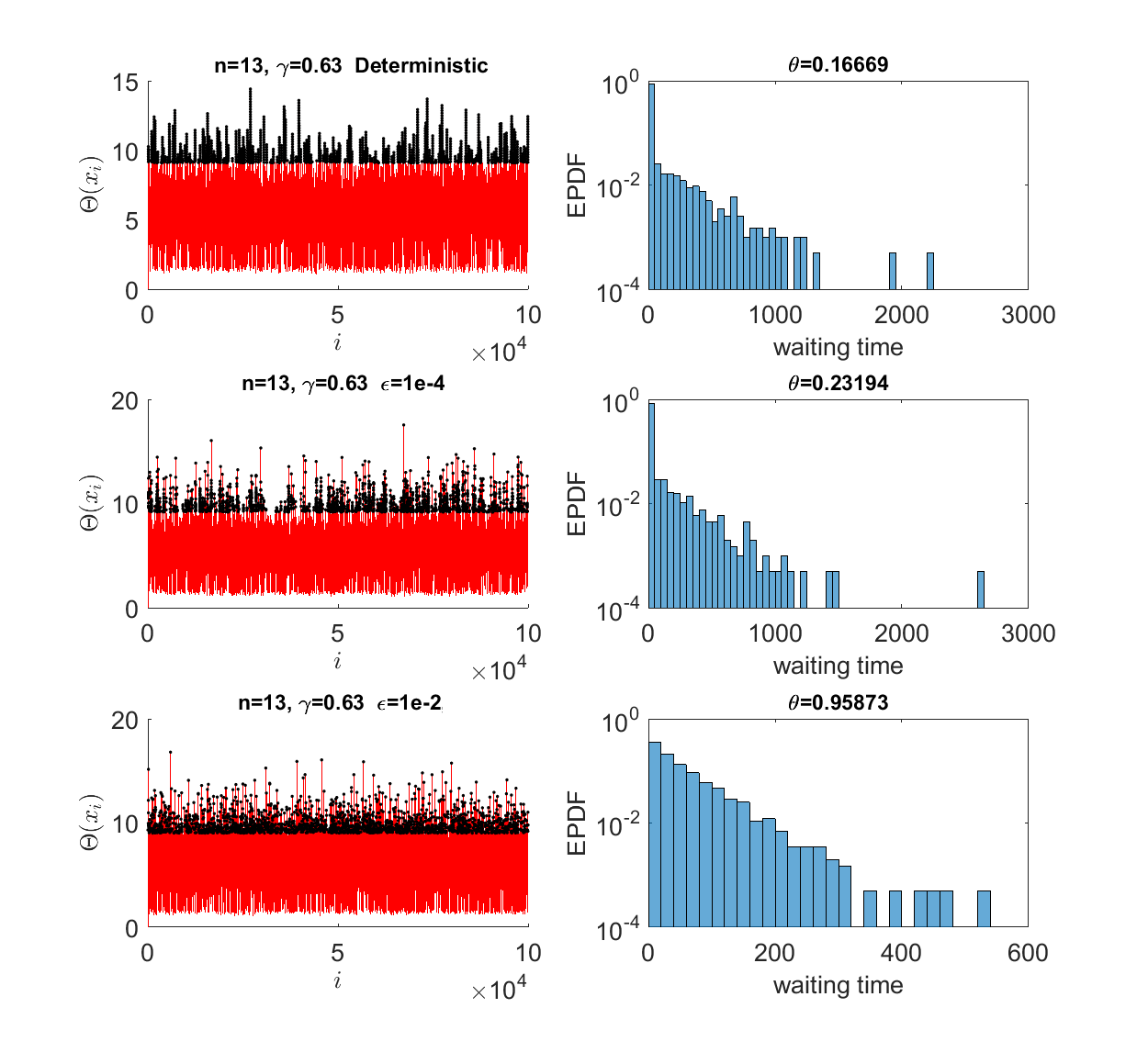}
\caption{Example of local case. Right: $\Theta$ time series (red) and exceedances (black). Left: empirical probability distribution (EPDF) of waiting time in the clusters. From top to bottom: Deterministic, additive noise with intensity $\epsilon=10^{-4}$, additive noise with intensity $\epsilon=10^{-2}$.}
\label{local_Poisson}
\end{figure}

\end{document}